\def\R{\mathbb{R}}
\def\N{\mathbb{N}}
\def\K{\mathcal{K}}
\def\Ki{\K_\infty}
\def\KL{\mathcal{KL}}
\def\U{\mathcal{U}}
\def\X{\mathcal{X}}
\def\C{\mathcal{C}}
\def\Ui{\mathsf{U}}
\def\zi{\mathbf{0}}
\def\comp{{\scriptstyle{\,\circ\,}}}
\def\B{\mathcal{B}}
\newtheorem{ex}{Example}[section]
\newtheorem{teo}{Theorem}[section]
\newtheorem{as}{Assumption}
\newtheorem{lema}{Lemma}[section]
\newtheorem{prop}{Proposition}[section]
\newtheorem{rem}{Remark}[section]
\title{Characterization of integral input-to-state stability for nonlinear time-varying systems of infinite dimension\thanks{Submitted to the editors November 29, 2022.
\funding{Work partially supported by Agencia I+D+i grant PICT 2018-01385, Argentina.}}}
\author{Jos\'e L. Mancilla-Aguilar\thanks{Departamento de Matem\'atica, Facultad de Ingenier\'{\i}a, UBA 
  (\email{jmancil@fi.uba.ar}).}
\and Jos\'e E. Rojas-Ruiz\footnotemark\thanks{Instituto Tecnol\'ogico de Buenos Aires, Argentina (\email{jrojas@itba.edu.ar}).}
\and Hernan Haimovich\thanks{Centro Internacional Franco-Argentino de Ciencias de la Informaci\'on y Sistemas (CIFASIS), UNR-CONICET, Rosario, Argentina
  (\email{haimovich@cifasis-conicet.gov.ar}).}}
\begin{document}

\maketitle

\begin{abstract}
  For large classes of infinite-dimensional time-varying control systems, the equivalence between integral input-to-state stability (iISS) and the combination of global uniform asymptotic stability under zero input (0-GUAS) and uniformly bounded-energy input/bounded state (UBEBS) is established under a reasonable assumption of continuity of the trajectories with respect to the input, at the zero input. By particularizing to specific instances of infinite-dimensional systems, such as time-delay, or semilinear over Banach spaces, sufficient conditions are given in terms of the functions defining the dynamics. In addition, it is also shown that for semilinear systems whose nonlinear term satisfies an affine-in-the-state norm bound, it holds that iISS becomes equivalent to just 0-GUAS, a fact known to hold for bilinear systems. An additional important aspect is that the iISS notion considered is more general than the standard one.
\end{abstract}

\begin{keywords}
infinite-dimensional systems, input-to-state stability, nonlinear control systems, time-varying systems
\end{keywords}

\begin{MSCcodes}
  93C23, 93C25, 93C10, 93D20, 93D09
\end{MSCcodes}

\section{Introduction}
Analyses and characterizations of input-to-state stability (ISS) and integral-ISS (iISS) for infinite-dimensional systems, such as time-delay systems, systems modelled by partial differential equations (PDEs) and semilinear systems on Banach spaces, have seen great progress mostly in the last decade 
\cite{Pepe2006, karpep_ejc08, Pepe2013, kanlin_ifacol17, linwan_CDC18, chagok_tac21, karkrs_tac16, karkrs_jco17, karkrs_book18, dasmir_mcss13, 
  dasmir_siamjco13, 
  mirito_siamjco15, 
  mirito_mcrf16, 
  mironc_scl16, 
  Mironchenko2018a, 
  Mironchenko2018, 
  mironc_tac19, 
  mironc_ifacol20, 
  jacmir_siamjco20, 
  mironc_jmci21, jacnab_jco18, 
%
  nabsch_mcss18, 
  schmid_mcss19, 
  hosjac_mcss22}. 
The reader may consult the excellent survey \cite{mirpri_siamrev20} for an 
updated account of results on ISS of infinite dimensional systems. As far as generality is concerned, arguably much greater progress has been made in the analysis and characterization of ISS \cite{mirpri_siamrev20}, as opposed to iISS. For example, for large classes of infinite-dimensional systems not restricted to semilinear systems over Banach spaces, \cite{Mironchenko2018} characterizes ISS in terms of simpler properties and \cite{mironc_tac19} does so for input-to-state practical stability. As mentioned in \cite{Mironchenko2018}, characterizations of ISS in terms of other simpler stability properties are advantageous in simplifying proofs and in analysing different classes of systems. 

As regards characterizations of iISS for time-invariant infinite dimensional systems, \cite{linwan_CDC18, chagok_tac21} characterize iISS for time-delay systems in terms of Lyapunov-Krasovskii functionals. For linear (infinite-dimensional) evolution equations on Banach spaces with bounded input operators, it is known that ISS, iISS and uniform global asymptotic stability under zero input become equivalent \cite{mirito_mcrf16}, analogously to finite-dimensional linear systems. More generally, for bilinear infinite-dimensional systems on Banach spaces \cite{mirito_mcrf16} establishes the equivalence between iISS and uniform global asymptotic stability under zero input and establishes the existence of iISS-Lyapunov functions in the case of Hilbert spaces, under additional assumptions. As for linear infinite-dimensional systems with unbounded input operators, \cite{jacnab_jco18} characterizes iISS in terms of the exponential stability of the semigroup and an admissibility condition on the inputs. In \cite{hosjac_mcss22}, the iISS of bilinear systems with unbounded operators is characterizated in terms of the iISS of certain associated linear systems.
To the best of the authors' knowledge, other more general characterizations have not yet been developed, nor characterizations valid for time-varying systems. One problem is that many of the characterizations developed in \cite{Angeli2000} for time-invariant finite-dimensional systems cease to hold already when the finite-dimensional system is time-varying or when the setting is such that existence of an iISS-Lyapunov function is not guaranteed \cite{haiman_tac18}. One characterization that remains valid in these cases is the superposition-type one 
\begin{align}
  \label{eq:iISS-superp}
  \text{``iISS $\Leftrightarrow$ 0-GUAS $\wedge$ UBEBS''},
\end{align}
stating that iISS is equivalent to the combination of global uniform asymptotic stability under zero input (0-GUAS) and uniform bounded-energy input/bounded state (UBEBS) \cite{haiman_tac18, haiman_auto20}, as originally stated for time-invariant systems in \cite{Angeli2000a}.

In this context, the contribution of the current paper is to show that the characterization of iISS as the combination of 0-GUAS and UBEBS remains valid for broad classes of time-varying infinite-dimensional systems, provided a reasonable condition of continuity with respect to the input, uniformly with respect to initial time, is satisfied by the system trajectories, at the zero input. This characterization is established with a focus on minimizing assumptions on the input so that, in addition to the standard iISS notion involving an integral of a function of the input, the characterization also holds for more general notions of iISS and UBEBS. By particularizing to specific classes of systems, such as semilinear over Banach spaces or to retarded ordinary differential equations, simpler sufficient conditions to ensure the required continuity with respect to the input are also given.

The organization of the paper is as follows. Section~\ref{sec:preliminaries} gives the definitions of time-varying system with inputs and the required stability properties, and poses the specific problem addressed. In Section~\ref{sec:main-result:-char}, the required assumption of continuity with respect to the input is given and the equivalence~(\ref{eq:iISS-superp}) is established. Sections~\ref{sec:time-delay} and~\ref{sec:semilinear-systems} provide simpler sufficient conditions to ensure the required continuity in the case of time-delay systems and semilinear systems on Banach spaces, respectively. Section~\ref{sec:semilinear-systems} also contains the particular case where iISS becomes equivalent to just 0-GUAS. Conclusions and final remarks are given in Section~\ref{sec:conclusions}. Most proofs are given in the Appendices.

\subsection*{Notation}  
$\N$, $\R$, $\R_{>0}$ and $\R_{\ge 0}$ denote the natural numbers, reals, positive, and nonnegative reals, respectively. We write $\alpha\in\K$ if $\alpha:\R_{\ge 0} \to \R_{\ge 0}$ is continuous, strictly increasing and $\alpha(0)=0$, and $\alpha\in\Ki$ if, in addition, $\alpha$ is unbounded. We write $\beta\in\KL$ if $\beta:\R_{\ge 0}\times \R_{\ge 0}\to \R_{\ge 0}$, {\color{black} $\beta(\cdot,t)\in \K$} for any $t\ge 0$ and, for any fixed $r\ge 0$, $\beta(r,t)$ monotonically decreases to zero as $t\to \infty$. The single bars $|\cdot|$ denote the Euclidean norm in $\R^n$. $B_r$ denotes a closed ball of radius $r\ge 0$ centred at $0$.

\section{Preliminaries}
\label{sec:preliminaries}
\subsection{Time-varying systems with inputs}
In order to make our results applicable to different classes of time-varying systems with inputs, for example those described by ordinary differential equations (ODE) with or without impulse effects, retarded differential equations (RDE), semilinear differential equations (SDE) and switched systems, among others, we consider the following general definition of time-varying system with inputs.
\begin{defin} \label{def:system}
Consider the triple $\Sigma=\left(\X ,\U ,\phi \right)$ consisting of
\begin{enumerate}[1.-]
\item A normed vector space $\left (\mathcal{X},\|\cdot\|_{\X}\right )$, which we call the state space.
\item A set of admissible inputs $\U=\{u:\R_{\ge 0}\to \Ui\}$, where $\Ui$ is a vector space of input values, which satisfies:
\begin{enumerate}
\item The zero input belongs to $\U$, i.e. $\zi\in\U$ with $\zi:\R_{\ge 0}\to \Ui$ such that $\zi(t)\equiv  0$.
\item If $u,v\in \U$ and $t>0$, then the concatenation $u\sharp_t  v\in \U$, where
\begin{align*}
u\sharp_t  v(\tau)=\begin{cases} u(\tau)\quad \tau \le t, \\
v(\tau)\quad \tau > t.
\end{cases}
\end{align*}
\end{enumerate}
\item A transition map $\phi:D_{\phi}\rightarrow \X$, with $D_{\phi}\subset \{(t,s,x,u):t\ge s\ge 0, x\in \X, u\in \U\}$, such that for all $s\ge 0$, $x\in \X$ and $u\in \U$,
$\{t\in \R_{\ge 0}:(t,s,x,u)\in D_{\phi}\}=[s,t_{(s,x,u)})$ with $s<t_{(s,x,u)}\le \infty$.
\end{enumerate}
We say that $\Sigma$ is a system with inputs if the following properties hold:
\begin{enumerate}
\item[$(\Sigma1)$]\underline{Identity}: $\phi(t,t,x,u)=x$ for all $t\ge 0$, $x\in \X$ and $u \in \U$.
\item[$(\Sigma2)$] \underline{Causality}: for all $(t,s,x,u) \in D_{\phi}$ with $t>s$, if $v\in \U$ satisfies $v(\tau)=u(\tau)$ for all $\tau \in (s,t]$, then $(t,s,x,v) \in D_{\phi}$ and $\phi(t,s,x,v)=\phi(t,s,x,u)$. 
\item[$(\Sigma3)$]\underline{Semigroup}: for all $(t,s,x,u) \in D_{\phi}$ with $s<t$, if $s<\tau<t$\\ then $\phi(t, \tau,\phi(\tau,s,x,u),u) = \phi(t,s,x,u)$.
\end{enumerate}
\end{defin}

This definition of system involves existence and uniqueness of solutions and is an extension of that in \cite{Mironchenko2018, mirpri_siamrev20} to encompass various classes of time-varying systems. One difference here is that the function $\phi(\cdot,s,x, u)$ is not assumed continuous for every fixed $(s,x,u)$ (cf. property $\Sigma_3$ in \cite{mirpri_siamrev20}). This allows for the occurrence of jumps in the state trajectory. For other definitions of systems with inputs the reader may consult \cite{Sontag1998a, Iasson2011}. 

Given $t_0\ge 0$, $x_0\in \X$ and $u\in \U$, the function $x(t)=\phi(t,t_0,x_0,u)$, $t\in [t_0,t_{(t_0,x_0,u)})$, will be referred to as the trajectory of $\Sigma$ corresponding to the initial time $t_0$, initial state $x_0$ and input $u$. We say that $\Sigma$ is forward complete if for all $t_0\ge 0$, $x_0\in \X$ and $u\in \U$, $t_{(t_0,x_0,u)}=\infty$, i.e., if every trajectory is defined for all times $t$ greater than the initial time.

Given an interval $J\subset \R_{\ge 0}$ and $u \in \U$ we define 
$u_J:\R_{\ge 0}\to \Ui$ via $u_J(\tau)=u(\tau)$ if $\tau \in J$ and $u_J(\tau)=0$ otherwise. Since $u_{(s,t]}=\zi \sharp_s u \sharp_t \zi$ and $u_{(s,\infty)}=\zi \sharp_s u $, both $u_{(s,t]}$ and $u_{(s,\infty)}$ belong to $\U$. Due to causality, if $(t,s,x,u)\in D_{\phi}$ and $t>s$ then $\phi(\tau,s,x,u_{(s,t]})=\phi(\tau,s,x,u_{(s,\infty)})=\phi(\tau,s,x,u)$ for all $\tau \in (s,t]$.


\subsection{Stability definitions}
The stability properties considered next are straightforward extensions of those defined for specific classes of systems with inputs. The set of admissible inputs $\U$ is assumed to be endowed with a nonnegative admissible functional, defined as follows.
\begin{defin} \label{def:admissible-f}
  The functional $\|\cdot\|_{\U}:\U\to \R_{\ge 0}\cup \{\infty\}$ is said to be \emph{admissible} if it satisfies the following conditions:
\begin{enumerate}[a)]
\item \label{item:zero-input prop} $\|\zi\|_{\U}=0$;
\item \label{item: monotonicity} for all $0\le s<t <\infty$ and all $u\in \U$, $\|u_{(s,t]}\|_{\U}<\infty$ and $\|u_{(s,t]}\|_{\U}\le \|u_{(s,\infty)}\|_{\U}\le \|u\|_{\U}$.
\end{enumerate}
\end{defin}
An admissible functional $\|\cdot\|_{\U}$ is not required to be a norm on $\U$, and is not even necessarily finite for every input $u$. The set of inputs $u\in \U$ for which $\|u\|_{\U}<\infty$ will be denoted by $\U_F$. {\color{black} The functionals defined below are examples of admissible functionals.
\begin{defin} \label {def:usual-functionals} Let $\Ui$ be a normed space with norm $\|\cdot\|_{\Ui}$ and let $L^{\infty}_{\mathrm{loc}}(\R_{\ge 0},\Ui)=\{u:\R_{\ge 0}\to \Ui:\|u(\cdot)\|_{\Ui}\in L^{\infty}_{\mathrm{loc}}(\R_{\ge 0})\}$, where $L^{\infty}_{\mathrm{loc}}(\R_{\ge 0})$ is the set of locally essentially bounded Lebesgue measurable functions $h:\R_{\ge 0}\to \R$. For a set of admissible inputs $\U\subset L^{\infty}_{\mathrm{loc}}(\R_{\ge 0},\Ui)$ we define the following functionals:
\begin{enumerate}[a)]
\item $\|u\|_{\infty}:=\mathrm{ess\:sup}_{t\ge 0}\|u(t)\|_{\Ui}$.
\item Given a positive, strictly increasing and unbounded sequence $\lambda=\{\tau_k\}$, $\|u\|_{\infty,\lambda}:=\|u\|_{\infty}+\sup_{k}\|u(\tau_k)\|_{\Ui}$.
\item Given $\kappa\in \K$, $\|u\|_{\kappa}:=\int_0^{\infty}\kappa\left(\|u(s)\|_{\Ui}\right)\:ds$.
\item For $\kappa$ and $\lambda$ as above, $\|u\|_{\kappa,\lambda}:=\|u\|_{\kappa}+\sum_{k}\kappa(\|u(\tau_k)\|_{\Ui})$.
\item For $\kappa$ as above and $T>0$, $\|u\|_{\kappa,T}:=\sup_{t\ge 0}\|u_{(t,t+T]}\|_{\kappa}$.
\end{enumerate}
\end{defin}
}
The following stability properties are extensions to time-varying systems of some of those in \cite{Mironchenko2018}. Since $\|u\|_{\U}=\infty$ may be true for some inputs $u\in \U$, we adopt the convention $\rho(\infty)=\infty$ for any function $\rho \in \Ki$. 
\begin{defin}\label{def:stability-def}
Let $\Sigma$ be a system with inputs and let $\|\cdot\|_{\U}$ be an admissible functional. Then
\begin{enumerate}[a)]
\item \label{item:0-GUAS}$\Sigma$ is zero-input globally uniformly asymptotically stable (0-GUAS) if there exists $\beta \in \KL$ such that for all $t_0\ge 0$ and $x_0\in \X$, the trajectory $x(t)=\phi(t,t_0,x_0,\zi)$ is defined for all $t\ge t_0$ and satisfies
\begin{align} \label{eq:0-guas}
\|x(t)\|_{\X}\le \beta(\|x_0\|_{\X} ,t-t_0) \quad \forall t\ge t_0.
\end{align}
\item \label{item:ISS}$\Sigma$ is input-to-state stable (ISS) with respect to the admissible functional $\|\cdot\|_{\U}$, abbreviated $\|\cdot\|_{\U}$-ISS, if $\Sigma$ is forward complete and there exist $\rho\in \Ki$ and $\beta \in \KL$ such that for all $t_0\ge 0$, $x_0\in \X$ and $u\in \U$, the corresponding trajectory $x(\cdot)$ of $\Sigma$ satisfies
\begin{align} \label{eq:iss}
\|x(t)\|_{\X}\le \beta(\|x_0\|_{\X},t-t_0)+\rho(\|u\|_{\U}) \quad \forall t\ge t_0.
\end{align}
\item \label{item:UGB}$\Sigma$ is uniformly globally bounded (UGB) with respect to the admissible functional $\|\cdot\|_{\U}$, abbreviated $\|\cdot\|_{\U}$-UGB, if $\Sigma$ is forward complete and there exist $\alpha, \rho\in \Ki$ and $c\ge 0$ such that for all $t\ge t_0\ge 0$, $x_0\in \X$ and $u\in \U$, the corresponding trajectory $x(\cdot)$ of $\Sigma$ satisfies
\begin{align} \label{eq:ugb}
\|x(t)\|_{\X}\le c+\alpha(\|x_0\|_{\X})+\rho(\|u\|_{\U})\quad \forall t\ge t_0.
\end{align}
\item \label{item:UGS}$\Sigma$ is uniformly globally stable (UGS) with respect to the admissible functional $\|\cdot\|_{\U}$, abbreviated $\|\cdot\|_{\U}$-UGS, if it is $\|\cdot\|_{\U}$-UGB and (\ref{eq:ugb}) holds with $c=0$.
\end{enumerate}  
\end{defin}
The word ``uniformly'' in Defintion~\ref{def:stability-def}~\ref{item:0-GUAS}),  \ref{item:UGB})~and~\ref{item:UGS}) involves uniformity both with respect to the state and with respect to initial time. For conciseness, we avoid the use of a double `U' and use `0-GUAS' instead of the `0-UGAS' used to denote uniformity with respect to the state in, e.g. \cite{mironc_scl16}. 
Whenever the admissible functional $\|\cdot\|_{\U}$ is clear from the context, we may remove the prefix $\|\cdot\|_{\U}$ and simply refer to the ISS, UGB or UGS properties.

Note the following:
\begin{enumerate}[i)]
\item \label{item:truncateu}Due to causality ($\Sigma 2$) and Definition~\ref{def:admissible-f}\ref{item: monotonicity}), replacing $\|u\|_{\U}$ by $\|u_{(t_0,t]}\|_{\U}$ or $\|u_{(t_0,\infty)}\|_{\U}$ in (\ref{eq:iss}) and (\ref{eq:ugb}), equivalent definitions of ISS and UGB (or UGS), respectively, are obtained.
\item Since (\ref{eq:iss}) and (\ref{eq:ugb}) are trivially satisfied when $\|u\|_{\U}=\infty$, no loss of generality is incurred if only inputs belonging to $\U_F$ are considered in the definitions of ISS and UGB.
\item When the system $\Sigma$ satisfies the boundedness-implies-continuation (BIC) property, {\em i.e.} when $\phi(\cdot,s,x,u)$ being bounded on $[s,t_{(s,x,u)})$ implies that $t_{(s,x,u)}=\infty$, then the forward completeness requirement can be removed from Definition~\ref{def:stability-def}. This happens because, since from item~\ref{item:truncateu}) above $\|u\|_{\U}$ can be replaced by $\|u_{(t_0,t]}\|_{\U}$ and $\|u_{(t_0,t]}\|_{\U} < \infty$ from Definition~\ref{def:admissible-f}\ref{item: monotonicity}), then the satisfaction of (\ref{eq:iss}) or (\ref{eq:ugb}) for all $t \in [s,t_{(s,x,u)})$ and all $s\ge 0$, $x\in \X$ and $u\in \U$ would imply that $t_{(s,x,u)}=\infty$ and therefore that $\Sigma$ is forward complete.
\end{enumerate}

Some standard stability properties defined for specific classes of systems, such as those modelled by ODEs with or without impulse effects, RDEs or PDEs, are recovered by choosing the admissible functional $\|\cdot\|_{\U}$ in a suitable manner. 
For example, for systems without impulse effects, $\|\cdot\|_{\infty}$-ISS is the standard ISS property and $\|\cdot\|_{\infty}$-UGS is the uniform bounded-input bounded-state property \cite{bacmaz_JCO00}. Moreover, for $\kappa \in \Ki$, then $\|\cdot\|_{\kappa}$-ISS becomes iISS, and $\|\cdot\|_{\kappa}$-UGB and $\|\cdot\|_{\kappa}$-UGS become uniformly bounded-energy input/bounded-state (UBEBS) and UBEBS with constant $c=0$, respectively (see \cite{Angeli2000a, haiman_tac18, Pepe2006, chagok_tac21}). In these cases, $\kappa$ is referred to as the iISS- or UBEBS-gain according to the considered property. Also, $\|\cdot\|_{\kappa,T}$-ISS is an extension of the p-ISS property considered in \cite{manhai_tac17}. 
In the case of systems with impulse effects, where the state jumps at a fixed sequence $\lambda$ of impulse-time instants, $\|\cdot\|_{\infty,\lambda}$-ISS, $\|\cdot\|_{\kappa,\lambda}$-ISS, $\|\cdot\|_{\kappa,\lambda}$-UGB and $\|\cdot\|_{\kappa,\lambda}$-UGS become, respectively, the usual ISS, iISS, UBEBS and UBEBS with constant $c=0$ properties and in the case of the iISS and UBEBS properties $\kappa$ is also referred to as the iISS- and UBEBS-gain \cite{heslib_auto08, manhai_tac20, haiman_rpic19, haiman_auto20}. 

A common feature of $\|\cdot\|_{\kappa}$ and $\|\cdot\|_{\kappa,\lambda}$ is that both functionals satisfy the following condition (actually with equality):
\begin{itemize}
\item[(E)] For every $u\in \U$ and $0\le t_1< t_2 < t_3$,  $\|u_{(t_1,t_3]}\|_{\U}\ge \|u_{(t_1,t_2]}\|_{\U}+\|u_{(t_2,t_3]}\|_{\U}$.
\end{itemize}

\begin{defin}
  \label{def:iISS}
  Let $\Sigma$ be a system with inputs and let $\|\cdot\|_{\U}$ be an admissible functional that satisfies condition (E).
  \begin{itemize}
  \item If $\Sigma$ is $\|\cdot\|_{\U}$-ISS, then we say that $\Sigma$ is $\|\cdot\|_{\U}$-iISS
  \item If $\Sigma$ is $\|\cdot\|_{\U}$-UGB, then we say that $\Sigma$ is $\|\cdot\|_{\U}$-UBEBS.
    \item If $\Sigma$ is $\|\cdot\|_{\U}$-UGS, then we say that $\Sigma$ is $\|\cdot\|_{\U}$-UBEBS with constant $c=0$, or just $\|\cdot\|_{\U}$-UBEBS0.
  \end{itemize}
  We remove the prefix $\|\cdot\|_{\U}$ when this is clear from the context. In addition, when $\|\cdot\|_{\U}=\|\cdot\|_{\kappa}$ for some $\kappa\in \Ki$, we refer to $\kappa$ as the iISS, UBEBS or UBEBS0 gain.
\end{defin}

      
\subsection{Problem statement}
It is clear from the very definitions that $\|\cdot\|_{\U}$-iISS implies $0$-GUAS and $\|\cdot\|_{\U}$-UBEBS. The aim of the current paper is to investigate the converse implication. 

Conditions that ensure that 0-GUAS and UBEBS imply iISS are known for systems generated by ODEs with or without impulse effects \cite{Angeli2000a, haiman_tac18, haiman_auto20} and for time-invariant time-delay systems \cite{chagok_tac21}. These conditions involve assumptions on the functions appearing in the equations that define the systems. More specifically, such functions must have some type of regularity and satisfy specific bounds. These conditions suggest that, for the kind of general system considered here, the transition map $\phi$ is required to have some specific regularity. 
  
The following example gives some insight into the type of regularity which may be required.
\begin{ex} \label{ex:regularity}
Consider the system $\Sigma=(\X,\U,\phi)$ with $(\X,\|\cdot\|_{\X})=(\Ui,\|\cdot\|_{\Ui})=(\R,|\cdot|)$, $\U$ the set of piecewise constant functions $u:\R_{\ge 0}\to \R$ and $\phi:D_{\phi}\to \R$, with $D_{\phi}=\{(t,s) : t\ge s\ge 0\}\times \R \times \U$, defined as follows. 
Pick any smooth function $g:\R \to \R$ such that $0\le g(r)\le 1$ for all $r\in \R$, $g(r)=1$ if $|r|\le 1$ and $g(r)=0$ if $|r|\ge 2$. For a given $(t_0,x_0,u)\in \R_{\ge 0}\times \R \times \U$, let $x(\cdot)$ be the unique solution of the scalar initial value problem
\begin{align} \label{eq:exsys}
\dot x=-x+g(x)v(t),\quad x(t_0)=x_0,
\end{align}
where $v:\R_{\ge 0}\to \R$ is the piecewise constant function defined by $v(t)=1/u(t)$ if $u(t)\neq 0$ and $v(t)=0$ if $u(t)=0$. Note that $x(\cdot)$ is defined for all $t\ge t_0$. Then we define $\phi(t,t_0,x_0,u)=x(t)$ for all $t\ge t_0$. 
It is a simple exercise to show that the triple $(\X,\U,\phi)$ is a system with inputs according to Definition \ref{def:system} and that it is forward complete. 

For $u=\zi$, we have that $v=\zi$, and then $\Sigma$ is $0$-GUAS since the trajectories corresponding to $u$ satisfy the equation $\dot{x}=-x$. From the fact that $g(r)=0$ for all $|r|\ge 2$, it follows that any solution of (\ref{eq:exsys}) satisfies $|x(t)|\le 2+|x_0|$ for all $t\ge t_0$ and therefore $\Sigma$ is $\|\cdot\|_{\kappa}$-UBEBS for any $\kappa \in \Ki$.

Next, we will prove that $\Sigma$ is not $\|\cdot\|_{\kappa}$-iISS for any $\kappa \in \Ki$. Suppose on the contrary that $\Sigma$ is $\|\cdot\|_{\kappa}$-iISS  for some $\kappa \in \K$. Then there exist $\beta \in \KL$ and $\rho \in \Ki$ so that (\ref{eq:iss}) holds, with $\|u\|_{\kappa}$ in place of $\|u\|_{\U}$. 

We claim that for every $\delta>0$ there exists an input $u$ such that $\|u\|_{\kappa}<\delta$ and $|\phi(t,0,0,u)|>\frac{1}{2}$ for some $t>0$.

Let $\mu>0$ be such that $\mu<1-e^{-1}$ and $\kappa(\mu)<\delta$. Define $u(t)=\mu$ if $t\in [0,1]$ and $u(t)=0$ for $t>1$. Then $\|u\|_{\kappa}=\kappa(\mu)<\delta$. Let $x(t)=\phi(t,0,0,u)$ and suppose that $|x(t)|\le \frac{1}{2}$ for all $t\in [0,1]$. From the definition of $\phi$ it follows that $\dot x(t)=-x(t)+\frac{1}{\mu}$ for all $t\in [0,1]$ and that $x(0)=0$. Therefore $x(t)=\int_0^t \frac{e^{-(t-s)}}{\mu}ds=\frac{1-e^{-t}}{\mu}$. In consequence $x(1)=\frac{1-e^{-1}}{\mu}>1$ which is a contradiction. So, there must exist $t\in [0,1]$ such that $|x(t)|>\frac{1}{2}$. This proves the claim.

From the claim it easily follows that $\Sigma$ cannot be $\|\cdot\|_{\kappa}$-iISS, since taking $\delta>0$ such that $\rho(\delta)<\frac{1}{2}$ and $u$ and $t$ as in the claim, then~(\ref{eq:iss}) implies that $\frac{1}{2}<|\phi(t,0,0,u)|\le \rho(\|u\|_{\kappa})<\frac{1}{2}$, which is absurd.
\end{ex}

Note that the transition map $\phi(t,t_0,x_0,u)$ in the preceding example is continuous in $(t,t_0,x_0)$ for any fixed $u\in \U$ but, due to the claim above, it is not continuous with respect to the input $u$ when $u$ is near the zero input $\zi$ (i.e. when $\|u\|_{\kappa}$ is small). This suggests that for the problem to have a solution some continuity condition on the map $\phi$ with respect to small inputs $u$ may be required.

The more specific problem addressed is hence the following:

{\em Find conditions on the transition map $\phi$ that ensure that the $0$-GUAS and $\|\cdot\|_{\U}$-UBEBS  of $\Sigma$ imply the $\|\cdot\|_{\U}$-iISS of $\Sigma$.}

\section{Main result: a characterization of iISS}
\label{sec:main-result:-char}
By solving the previous problem, the characterization of iISS as the superposition~(\ref{eq:iISS-superp}) will be extended to general classes of infinite-dimensional systems. The following condition on the transition map will be required.
\begin{as}
  \label{as:transition-map}
  The transition map $\phi$ of the system $\Sigma$ satisfies the following:\\
  For every $r>0$, $\varepsilon>0$ and $T>0$ there exists $\delta=\delta(r,\varepsilon,T)>0$ such that for every $t_0\ge 0$, $x_0 \in \X$ and $u\in \U$ with $\|u\|_{\U}\le \delta$, if for some $t^*\in [t_0, t_0+T]$ it happens that $\|x(t)\|_{\X}\le r$ and $\|z(t)\|_{\X}\le r$ for all $t\in [t_0,t^*]$, where $x(t)=\phi(t,t_0,x_0,u)$ and $z(t)=\phi(t,t_0,x_0,\zi)$, then 
\begin{align} \|z(t)-x(t)\|_{\X} \le \varepsilon\quad \forall t\in [t_0,t^*].
\end{align}
\end{as}
Assumption~\ref{as:transition-map} means that the solution $x$ corresponding to an input can be made arbitrarily close to the zero-input solution $z$ by reducing the input, as measured by the admissible functional, whenever both solutions remain bounded by $r$ over some time interval of prespecified maximum length $T$. This should happen uniformly over the initial time.

{\color{black} Assumption \ref{as:transition-map} is satisfied by some general classes of time-varying systems such as those described by ODEs and RDEs (as shown in Section~\ref{sec:time-delay}, where the ODE case is covered by RDEs with maximum delay $0$) and SDEs (Section~\ref{sec:semilinear-general}), assuming that the admissible functional is of the type $\|\cdot\|_{\kappa}$ and that the functions defining the dynamics satisfy some suitable boundedness and regularity conditions. 
  Assumption \ref{as:transition-map} can also be proved to hold for systems described by ODEs with impulse effects when the sequence of impulse times is fixed and the admissible functional is of the type $\|\cdot\|_{\kappa,\lambda}$. This can be done using results and techniques in \cite{haiman_auto20} whenever the sequence of impulse times satisfies the uniform incremental boundedness (UIB) property defined in that paper.}

{\color{black} When the system is forward complete and has the UBRS property defined next, Assumption \ref{as:transition-map} can be formulated equivalently in much simpler form, as the following lemma shows. The proof is given in Appendix~\ref{app:proof-lem-ass1}. Comments on the UBRS property are given later, after Theorem~\ref{thm:ISS-e-d}.
  \begin{lema} \label{lem:ass1} Let $\|\cdot\|_{\U}$ be an admissible functional. Suppose that $\Sigma$ is forward complete and has the 
    \begin{enumerate}[C1)]
    \item \label{item:ubrs} uniformly bounded reachability sets (UBRS) property: For every $T>0$, $r>0$ and $s>0$ there exists $C\ge 0$ such that for all $t_0\ge 0$, $x_0\in \X$ with $\|x_0\|_{\X} \le r$ and $u\in\mathcal{U}$ with $\|u\|_{\U}\leq s$, then $\|\phi(t,t_{0},x_0,u)\|_{\mathcal{X}}\leq C$ for all $t\in[t_{0},t_{0}+T]$.
    \end{enumerate}
    Then, the following are equivalent:
\begin{enumerate}[a)]
\item \label{item:I} $\Sigma$ satisfies Assumption \ref{as:transition-map}.
\item \label{item:II} There exists $\Gamma:\R^3_{\ge 0}\to \R_{\ge 0}$ such that $\Gamma$ is continuous and nondecreasing in each of its first two arguments, of class $\Ki$ in the third argument and the following holds: 
if $x(t)=\phi(t,t_0,x_0,u)$ and $z(t)=\phi(t,t_0,x_0,\zi)$ for some $t_0\ge 0$, $x_0\in \X$ and $u\in \U$, then 
\begin{align} \label{eq:ass1} \|z(t)-x(t)\|_{\X} \le \Gamma(t-t_0,\|x_0\|_{\X},\|u_{(t_0,t]}\|_{\U})\quad \forall t\ge t_0.
\end{align}
\end{enumerate}
\end{lema}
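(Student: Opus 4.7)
The plan is to prove the two implications separately, with (a)$\Rightarrow$(b) being the substantial direction. For (b)$\Rightarrow$(a), given $r,\varepsilon,T>0$, since $\Gamma(T,r,\cdot)\in\Ki$ I can choose $\delta>0$ with $\Gamma(T,r,\delta)\le \varepsilon$. For any $t_0,x_0,u$ with $\|u\|_\U\le \delta$ satisfying the hypothesis of Assumption~\ref{as:transition-map}, evaluating the trajectory bound at $t=t_0$ forces $\|x_0\|_\X\le r$, and hence, using (\ref{eq:ass1}), the monotonicity of $\Gamma$ in its first two arguments, and Definition~\ref{def:admissible-f}\ref{item: monotonicity}),
\[\|z(t)-x(t)\|_\X\le \Gamma(t-t_0,\|x_0\|_\X,\|u_{(t_0,t]}\|_\U)\le \Gamma(T,r,\delta)\le \varepsilon\]
for all $t\in[t_0,t^*]\subseteq[t_0,t_0+T]$, which is exactly Assumption~\ref{as:transition-map}.

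For (a)$\Rightarrow$(b) I would introduce the worst-case deviation
\[\omega(T,r,s):=\sup\{\|\phi(t,t_0,x_0,u)-\phi(t,t_0,x_0,\zi)\|_\X:t_0\ge 0,\ \|x_0\|_\X\le r,\ u\in\U,\ \|u_{(t_0,t]}\|_\U\le s,\ t\in[t_0,t_0+T]\}.\]
This is nondecreasing in each argument, vanishes at $T=0$ and at $s=0$, and is finite because by causality one may replace $u$ by $u_{(t_0,t]}$ without altering the trajectory on $[t_0,t]$, so UBRS gives $\omega(T,r,s)\le 2C(T,r,s)$. The crucial step is to prove $\omega(T,r,s)\to 0$ as $s\to 0^+$ for each fixed $T,r$: given $\varepsilon>0$, set $r^*:=C(T,r,1)$ from UBRS and $\delta^*:=\delta(r^*,\varepsilon,T)$ from Assumption~\ref{as:transition-map}; then for every $s\le \min(\delta^*,1)$ and any admissible configuration, both trajectories stay in the ball of radius $r^*$ throughout $[t_0,t_0+T]$, so Assumption~\ref{as:transition-map} (applied with $t^*=t_0+T$) yields $\|z(t)-x(t)\|_\X\le \varepsilon$ on the whole interval, whence $\omega(T,r,s)\le \varepsilon$.

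The main obstacle is to upgrade the bare monotone function $\omega$ to a $\Gamma$ that is continuous and nondecreasing in its first two arguments and of class $\Ki$ in its third. I plan a three-stage averaging:
\[\omega_1(T,r,s):=\int_0^1\omega(T+a,r,s)\,da,\qquad \omega_2(T,r,s):=\int_0^1\omega_1(T,r+b,s)\,db,\]
\[\Gamma(T,r,s):=\frac{1}{s}\int_s^{2s}\omega_2(T,r,\sigma)\,d\sigma+s\ \text{ for }s>0,\qquad \Gamma(T,r,0):=0.\]
Each integral is well defined by local boundedness of $\omega$ inherited from UBRS, and each averaging introduces continuity in the variable being averaged while preserving, via dominated convergence together with the monotonicity of $\omega$, the regularity gained in previous stages. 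The $+s$ summand in $\Gamma$ secures strict monotonicity and unboundedness in $s$, while the integral term is itself nondecreasing in $s$ because, writing $G(s):=\int_s^{2s}\omega_2(T,r,\sigma)\,d\sigma$, a direct computation gives $sG'(s)-G(s)\ge s(\omega_2(T,r,2s)-\omega_2(T,r,s))\ge 0$. Continuity of $\Gamma$ at $s=0$ follows from $\omega_2(T,r,\sigma)\to 0$ as $\sigma\to 0^+$, which reduces to the key limit established above. Finally $\Gamma\ge \omega_2\ge \omega_1\ge \omega$ by monotonicity of each integrand, so (\ref{eq:ass1}) holds.
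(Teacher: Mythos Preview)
Your proof is correct and follows essentially the same route as the paper's: both directions are argued identically, with (a)$\Rightarrow$(b) proceeding by defining the worst-case deviation as a supremum, using UBRS for finiteness, using Assumption~\ref{as:transition-map} together with a UBRS bound (with $s\le 1$) to get the limit $\omega(T,r,s)\to 0$ as $s\to 0^+$, and then regularizing. The only notable difference is that where the paper simply writes ``by standard arguments one can prove the existence of a function $\Gamma$\ldots'', you supply an explicit three-stage averaging construction; this is a genuine addition of detail, and your argument for it is sound (in particular the monotonicity of $s\mapsto\frac{1}{s}\int_s^{2s}\omega_2(T,r,\sigma)\,d\sigma$ follows cleanly from the substitution $\sigma=s\tau$ without needing differentiability).
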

}
The following theorem is our main result.
\begin{teo} \label{thm:main}
Let $\Sigma$ be a forward complete system endowed with an admissible functional $\|\cdot\|_{\U}$ satisfying condition (E). Let Assumption \ref{as:transition-map} hold. Then the following are equivalent:
\begin{enumerate}[(a)]
\item \label{item:a} $\Sigma$ is $0$-GUAS and UBEBS.
\item \label{item:b} $\Sigma$ is iISS.
\end{enumerate}
\end{teo}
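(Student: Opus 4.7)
The direction (\ref{item:b})$\Rightarrow$(\ref{item:a}) is immediate from the definitions. For (\ref{item:a})$\Rightarrow$(\ref{item:b}) my approach is a three-stage argument. \emph{Stage 1 (finite-horizon smallness under small inputs):} I would first show that for every $R,\varepsilon>0$ there exist $T=T(R,\varepsilon)>0$ and $\delta=\delta(R,\varepsilon)>0$ such that, for all $t_0\ge 0$, $\|x_0\|_{\X}\le R$ and $u\in\U$ with $\|u\|_{\U}\le\delta$, the trajectory $x(t)=\phi(t,t_0,x_0,u)$ satisfies $\|x(t_0+T)\|_{\X}\le\varepsilon$. Indeed, pick $T$ via the $\beta\in\KL$ of 0-GUAS so that $\beta(R,T)\le\varepsilon/2$; restrict attention to inputs with $\|u\|_{\U}\le 1$ so that UBEBS (with gains $\alpha,\rho$ and constant $c$) bounds both $x(\cdot)$ and $z(\cdot):=\phi(\cdot,t_0,x_0,\zi)$ by $r:=c+\alpha(R)+\rho(1)+\beta(R,0)$ throughout $[t_0,t_0+T]$; then invoke Assumption~\ref{as:transition-map} with parameters $(r,\varepsilon/2,T)$ to obtain $\delta_0>0$ such that $\|u\|_{\U}\le\delta_0$ forces $\|x(t)-z(t)\|_{\X}\le\varepsilon/2$ on that interval. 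Taking $\delta:=\min(\delta_0,1)$ and evaluating at $t_0+T$ gives the claim.

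\emph{Stage 2 (iteration via (E)):} Using the semigroup property $(\Sigma 3)$ to restart at $x(t_k)$ and causality $(\Sigma 2)$ to replace the input by its restriction, I would apply Stage~1 repeatedly on successive windows $[t_k,t_{k+1}]$ with $t_{k+1}-t_k=T(\varepsilon_k,\varepsilon_{k+1})$ for a sequence $\varepsilon_k\downarrow 0$ starting at $\varepsilon_0=\|x_0\|_{\X}$, chosen via a Sontag-type construction. Property~(E) guarantees $\sum_k\|u_{(t_k,t_{k+1}]}\|_{\U}\le\|u\|_{\U}$, so if $\|u\|_{\U}\le\eta\le\min_k\delta(\varepsilon_k,\varepsilon_{k+1})$ every iteration remains admissible. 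This should produce $\hat\beta\in\KL$ and $\sigma\in\Ki$ such that $\|x(t)\|_{\X}\le\hat\beta(\|x_0\|_{\X},t-t_0)$ whenever $\|u\|_{\U}\le\sigma(\|x_0\|_{\X})$, uniformly in $t_0$. \emph{Stage 3 (assembly):} In this first regime Stage~2 already furnishes the $\tilde\beta$-term. In the complementary regime $\|u\|_{\U}>\sigma(\|x_0\|_{\X})$, which forces $\|x_0\|_{\X}<\sigma^{-1}(\|u\|_{\U})$, UBEBS bounds $\|x(t)\|_{\X}$ by $c+\alpha\comp\sigma^{-1}(\|u\|_{\U})+\rho(\|u\|_{\U})$; the additive constant $c$ is absorbed using Stage~1 applied with $R=0$ (for which $z\equiv 0$ and Assumption~\ref{as:transition-map} gives a $\|u\|_{\U}$-vanishing transient bound even when $\|x_0\|_{\X}=0$), producing a single $\tilde\rho\in\Ki$ of $\|u\|_{\U}$. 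Gluing the two regimes yields the required $\tilde\beta\in\KL$ and $\tilde\rho\in\Ki$ and hence iISS.

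The main obstacle is the bookkeeping in Stage~2: choosing $\{\varepsilon_k\}$, the corresponding $T_k$ and the threshold $\sigma$ so that (i) the cumulative horizon $\sum_k T(\varepsilon_k,\varepsilon_{k+1})$ can be inverted into a genuine $\KL$-decay rate in $\|x_0\|_{\X}$, and (ii) the admissible input envelope $\sigma\in\Ki$ captures inputs proportionally small relative to the state so that the two regimes in Stage~3 tile $\R_{\ge 0}\times\R_{\ge 0}$. The standard tools are the Sontag $\KL$-lemma and the freedom to take $T(R,\varepsilon)$ and $\delta(R,\varepsilon)$ monotone in both arguments (nondecreasing in $R$, nonincreasing in $\varepsilon$ for $T$; the opposite for $\delta$), but the uniformity in $t_0$ (inherited from Assumption~\ref{as:transition-map} and the $\KL$-uniformity in 0-GUAS) must be tracked carefully throughout.
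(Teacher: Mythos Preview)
Your Stage~2 target is not a matter of bookkeeping---it is unattainable. You ask for $\hat\beta\in\KL$ and $\sigma\in\Ki$ with $\|x(t)\|_{\X}\le\hat\beta(\|x_0\|_{\X},t-t_0)$ whenever $\|u\|_{\U}\le\sigma(\|x_0\|_{\X})$. Already for $\dot x=-x+u$ on $\R$ with $\|u\|_{\U}=\int_0^\infty\kappa(|u(s)|)\,ds$ this fails: take $x_0=1$ and $u$ equal to the constant $s:=\kappa^{-1}(\sigma(1))>0$ on $[T,T+1]$ and zero elsewhere; then $\|u\|_{\U}=\kappa(s)=\sigma(1)$ yet $x(T+1)\ge s(1-e^{-1})$ for \emph{every} $T>0$, forcing $\hat\beta(1,\cdot)\ge s(1-e^{-1})$ and precluding decay to zero. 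The obstruction is intrinsic to condition~(E): it bounds only the \emph{total} energy, so a fixed-size kick can be postponed to arbitrarily late times. Your iteration feels this directly: as $\varepsilon_k\downarrow 0$, Stage~1 invokes Assumption~\ref{as:transition-map} with tolerance $\varepsilon_{k+1}/2\to 0$, which forces $\delta(\varepsilon_k,\varepsilon_{k+1})\to 0$, hence $\min_k\delta(\varepsilon_k,\varepsilon_{k+1})=0$ and only the zero input survives the envelope. The same problem undermines Stage~3's absorption of $c$ via ``Stage~1 with $R=0$'': that controls only small $\|u\|_{\U}$, but in the regime $\|u\|_{\U}>\sigma(\|x_0\|_{\X})$ with $\|x_0\|_{\X}$ small, $\|u\|_{\U}$ can itself be arbitrarily small and $c$ remains the dominant term.

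The paper avoids any pure-$\KL$ claim under nonzero input. It first \emph{removes the constant} $c$ by a separate lemma: under Assumption~\ref{as:transition-map}, 0-GUAS $\wedge$ UBEBS $\Rightarrow$ UBEBS with $c=0$ (i.e.\ UGS), proved by inducting over consecutive windows of one fixed length. With UGS in hand, iISS is obtained through an $\varepsilon$--$\delta$ characterization ISS $\Leftrightarrow$ UBRS $\wedge$ UCEP $\wedge$ UUAG. The crucial UUAG step uses condition~(E) via a \emph{pigeonhole} argument, not an iteration: partition $[t_0,t_0+N\tilde T]$ into $N=\lceil\psi(r)/\gamma\rceil$ windows; superadditivity~(E) forces at least one window $(t_j,t_{j+1}]$ to carry input $<\gamma$, and on that single window 0-GUAS plus Assumption~\ref{as:transition-map} drive the state below $\tilde\varepsilon$. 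From $t_{j+1}$ onward, UGS---crucially now with $c=0$---gives $\|x(t)\|_{\X}\le\alpha(\tilde\varepsilon)+\rho(\|u\|_{\U})=\varepsilon+\rho(\|u\|_{\U})$, which is exactly the UUAG bound with a gain in $\|u\|_{\U}$ rather than a decaying term. The two missing ingredients in your plan are thus (i) upgrading UBEBS to UGS \emph{before} attempting any asymptotic estimate, and (ii) replacing the infinite iteration by a single pigeonhole step followed by a restart through UGS.
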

The proof of Theorem \ref{thm:main} employs the $\varepsilon$-$\delta$ characterization of the ISS property provided by Theorem \ref{thm:ISS-e-d} and Lemma \ref{lem:o-guas+UGR:UGS}, whose proofs are given in the Appendix. This $\varepsilon$-$\delta$ characterization applies to the general ISS property in Definition~\ref{def:stability-def}\ref{item:ISS}) where the input functional should be admissible but is not required to satisfy condition (E). In what follows, $B_r$ denotes the closed ball of radius $r\ge 0$ centred at $0$ in $\X$, namely $B_r = \{x \in \X : \|x\|_{\X} \le r\}$. 
\begin{teo}\label{thm:ISS-e-d}
Let $\Sigma$ be a forward complete system and let $\|\cdot\|_{\U}$ be an admissible functional. Then $\Sigma$ is $\|\cdot\|_{\U}$-ISS if and only if the following properties hold:
\begin{enumerate}[C1)] {\color{black} 
\item Uniformly bounded rechability sets (UBRS), as defined in Lemma~\ref{lem:ass1}.
\item Uniform continuity at the equilibrium point (UCEP): \label{item:smallx0u}For all $h>0$ and $\varepsilon >0$ there exists $\delta>0$ such that for every $t_0\ge 0$, $x_0\in B_{\delta}$ and $u\in\mathcal{U}$ with $\|u\|_{\U}\le \delta$, 
$\|\phi(t,t_{0},x_0,u)\|_{\mathcal{X}}\leq \varepsilon$ for all $t\in [t_0,t_0+h]$.
\item \label{item:attract} Uniform (w.r.t.~initial time) uniform (w.r.t.~initial state) asymptotic gain (UUAG):} There exists $\nu\in\K$ such that for all $r\geq\varepsilon>0$ there is a positive $T=T(r,\varepsilon)$ so that the following holds: for every $t_0\ge 0$, $x_0\in B_r$ and $u\in\mathcal{U}$ we have that
$\|\phi(t,t_{0},x_0,u)\|_{\mathcal{X}}\leq \varepsilon+\nu\left(\|u\|_{\U}\right)$ for all $t \ge t_{0}+T$.
\end{enumerate}
\end{teo}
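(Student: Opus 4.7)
The plan is to prove each direction separately. For the forward direction, the ISS estimate $\|\phi(t,t_0,x_0,u)\|_{\X} \le \beta(\|x_0\|_{\X}, t-t_0) + \rho(\|u\|_{\U})$ immediately yields UBRS with $C := \beta(r,0) + \rho(s)$; UCEP by choosing $\delta > 0$ small enough that $\beta(\delta,0) + \rho(\delta) \le \varepsilon$; and UUAG with $\nu := \rho$ and $T(r,\varepsilon)$ any time for which $\beta(r,T) \le \varepsilon$.

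For the converse, I would first upgrade UCEP from bounded horizon to the entire half-line by stitching it with UUAG: given $\varepsilon > 0$, take $T$ from UUAG with $r = \varepsilon/2$ and target $\varepsilon/2$, then take $\delta_1$ from UCEP with horizon $T$ and target $\varepsilon$, and set $\delta := \min\{\delta_1,\, \varepsilon/2,\, \nu^{-1}(\varepsilon/2)\}$; the resulting trajectory is bounded by $\varepsilon$ on $[t_0,t_0+T]$ via UCEP and on $[t_0+T,\infty)$ via UUAG, since $\varepsilon/2 + \nu(\|u\|_{\U}) \le \varepsilon$. Next, define
\[
\mu(r,s) := \sup\bigl\{\|\phi(t,t_0,x_0,u)\|_{\X} : t \ge t_0 \ge 0,\ \|x_0\|_{\X} \le r,\ \|u\|_{\U} \le s\bigr\}.
\]
The same splitting applied at $T(r,r)$ shows $\mu(r,s)$ is finite (UBRS on $[t_0,t_0+T(r,r)]$ and $r+\nu(s)$ afterwards); $\mu$ is monotone in each argument; and the upgraded UCEP forces $\mu(r,s) \to 0$ as $(r,s)\to(0,0)$. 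A standard two-variable decomposition then produces $\sigma,\gamma \in \Ki$ with $\mu(r,s) \le \sigma(r) + \gamma(s)$, i.e.~a uniform global stability estimate.

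To build the $\KL$-function, set $\tilde\gamma := \gamma + \nu \in \Ki$ and
\[
\Psi(r,t) := \sup\bigl\{\max\{0,\, \|\phi(\tau,t_0,x_0,u)\|_{\X} - \tilde\gamma(\|u\|_{\U})\} : t_0 \ge 0,\ \tau \ge t_0+t,\ \|x_0\|_{\X} \le r,\ u\in\U\bigr\}.
\]
The previous step gives $\Psi(r,0) \le \sigma(r)$; UUAG yields $\Psi(r,t) \to 0$ as $t \to \infty$ for each $r$; $\Psi(0,t) = 0$ by the upgraded UCEP; and $\Psi$ is nondecreasing in $r$ and nonincreasing in $t$. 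Sontag's $\KL$-lemma then produces $\beta \in \KL$ with $\Psi(r,t) \le \beta(r,t)$, and the ISS bound $\|\phi(t,t_0,x_0,u)\|_{\X} \le \beta(\|x_0\|_{\X}, t-t_0) + \tilde\gamma(\|u\|_{\U})$ follows. The step I expect to be most delicate is this concluding $\KL$-extraction, since in infinite dimensions $\Psi$ may fail to be continuous in $r$ (the state space lacks local compactness); the standard remedy is to replace $\Psi$ by a suitable upper-semicontinuous majorant preserving the same limit behaviour before invoking the $\KL$-lemma.
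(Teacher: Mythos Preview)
Your proof is correct and follows essentially the same route as the paper's: the forward direction is identical, and for the converse both you and the paper first establish a UGS-type bound $\|\phi(t,t_0,x_0,u)\|_{\X}\le\hat\varphi(\|x_0\|_{\X})+\hat\varphi(\|u\|_{\U})$ from UBRS, UCEP and UUAG, and then combine this with UUAG to extract the $\KL$-function (the paper simply defers this last step to \cite[Lemma~8]{Mironchenko2018} and \cite[Lemma~2.7]{sonwan_scl95}, while you spell out the construction via $\Psi$).

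Two small remarks. First, what you call ``Sontag's $\KL$-lemma'' is usually the decomposition $\beta(r,t)=\alpha_2(\alpha_1(r)e^{-t})$; what you actually need here is the elementary \emph{majorization} result that any $\Psi:\R_{\ge 0}^2\to\R_{\ge 0}$ which is nondecreasing in $r$, nonincreasing in $t$, satisfies $\Psi(r,0)\le\sigma(r)$ for some $\sigma\in\Ki$, and $\Psi(r,t)\to 0$ as $t\to\infty$ for each $r$, is bounded above by some $\beta\in\KL$. Second, your worry about continuity of $\Psi$ in $r$ is unnecessary: this majorization is a purely real-variable construction on a function $\R_{\ge 0}^2\to\R_{\ge 0}$ and uses only the monotonicity and limit properties listed, never continuity or compactness of the state space. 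The infinite-dimensional setting plays no role at this step.
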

{\color{black} The UBRS, UCEP and UUAG properties are generalizations to time-varying systems of BRS, CEP and UAG as defined in \cite{Mironchenko2018} for time-invariant infinite-dimensional systems. These properties are generalized so that they are uniform with respect to initial time. When particularized to time-invariant systems, UBRS, UCEP and UUAG are still more general than BRS, CEP and UAG of \cite{Mironchenko2018} because the input functional $\|\cdot\|_{\U}$ is not required to be a norm. 
Theorem~\ref{thm:ISS-e-d} generalizes the equivalence between items i) and ii) in \cite[Thm. 5]{Mironchenko2018}, namely 
\begin{align*}
  \text{ISS $\Leftrightarrow$ UAG $\wedge$ CEP $\wedge$ BRS},
\end{align*}
on the one hand by allowing time-varying systems and on the other by considering a more general definition of ISS that incorporates iISS within a unifying framework.

Theorem \ref{thm:ISS-e-d} is also a generalization of the $\varepsilon$-$\delta$ characterization of ISS in Lemma 2.7 of \cite{sonwan_scl95}. The property UBRS is not explicitly stated in Lemma~2.7 of \cite{sonwan_scl95} because it is automatically satisfied for time-invariant finite-dimensional systems defined by $\dot{x}=f(x,u)$ with $f$ locally Lipschitz in $(x,u)$. }

The proof of Theorem \ref{thm:ISS-e-d} is inspired in the proofs of Lemma 2.7 of \cite{sonwan_scl95} and of Theorem 5 in \cite{Mironchenko2018} and is provided for the sake of completeness in Appendix~\ref{app:proof-thm:ISS-e-d}. 

The proof of our main result, Theorem~\ref{thm:main}, requires the following two lemmas. The first one shows that under the continuity with respect to the input provided by Assumption~\ref{as:transition-map}, then 0-GUAS $\wedge$ UGB $\Rightarrow$ UGS. Note that when the input functional satisfies condition (E), then the latter reads as 0-GUAS $\wedge$ UBEBS $\Rightarrow$ UBEBS0 (Definition~\ref{def:iISS}). The second lemma gives a specific bound for the trajectories of a 0-GUAS and forward complete system that satisfies Assumption~\ref{as:transition-map}.
\begin{lema}
  \label{lem:o-guas+UGR:UGS}
  Let $\Sigma$ be a system and let $\|\cdot\|_{\U}$ be an admissible functional. Let Assumption~\ref{as:transition-map} hold. If $\Sigma$ is 0-GUAS and $\|\cdot\|_{\U}$-UGB then it is $\|\cdot\|_{\U}$-UGS.
\end{lema}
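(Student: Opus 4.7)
The plan is to first establish a ``uniform continuity at the equilibrium for all time'' property, namely that for every $\eta>0$ there exists $\delta>0$ such that whenever $\|x_0\|_{\X}\le\delta$ and $\|u\|_{\U}\le\delta$, the trajectory $x(t)=\phi(t,t_0,x_0,u)$ satisfies $\|x(t)\|_{\X}\le\eta$ for every $t\ge t_0$, uniformly over $t_0\ge 0$, and then to combine this with the UGB estimate to synthesize the class-$\Ki$ functions realizing the UGS bound.

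The heart of the argument is an iteration on intervals of a prescribed length. Fix $\eta>0$ and choose $\delta_0\in(0,\eta]$ small enough that $\beta(\delta_0,0)+\delta_0/2\le\eta$; this is possible because the map $r\mapsto\beta(r,0)+r/2$ belongs to $\Ki$. By $0$-GUAS, pick $T>0$ such that $\beta(\delta_0,T)\le\delta_0/2$. Let $r:=c+\alpha(\delta_0)+\rho(\delta_0)$, with $c,\alpha,\rho$ taken from the UGB estimate, and observe that, by UGB together with $0$-GUAS, any trajectory $x$ starting at any initial time $t_0$ with $\|x_0\|_{\X}\le\delta_0$ and input satisfying $\|u\|_{\U}\le\delta_0$, as well as its zero-input companion $z(t)=\phi(t,t_0,x_0,\zi)$, remains in $B_r$ for all $t\ge t_0$. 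Apply Assumption~\ref{as:transition-map} with these $r$, $\varepsilon=\delta_0/2$ and $T$ to obtain $\tilde\delta_0>0$, and set $\delta:=\min(\delta_0,\tilde\delta_0)$.

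Now fix $\|x_0\|_{\X}\le\delta$, $\|u\|_{\U}\le\delta$, and set $t_k:=t_0+kT$ for $k=0,1,2,\ldots$. I would prove by induction on $k$ that $\|x(t_k)\|_{\X}\le\delta_0$ and $\|x(t)\|_{\X}\le\eta$ for every $t\in[t_k,t_{k+1}]$. For the inductive step, the semigroup property combined with causality lets one view the trajectory from $t_k$ onward as one started at time $t_k$ from state $x(t_k)\in B_{\delta_0}$ with input $u_{(t_k,\infty)}$, whose $\U$-norm is at most $\|u\|_{\U}\le\delta$ by Definition~\ref{def:admissible-f}. The associated zero-input trajectory $z_k$ then satisfies $\|z_k(t)\|_{\X}\le\beta(\delta_0,t-t_k)$, and both $x$ and $z_k$ stay in $B_r$ on $[t_k,t_{k+1}]$, so Assumption~\ref{as:transition-map} yields $\|x(t)-z_k(t)\|_{\X}\le\delta_0/2$ throughout the interval. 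The triangle inequality then gives $\|x(t)\|_{\X}\le\beta(\delta_0,0)+\delta_0/2\le\eta$ on the interval and $\|x(t_{k+1})\|_{\X}\le\beta(\delta_0,T)+\delta_0/2\le\delta_0$ at its right endpoint, closing the induction.

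With the uniform-continuity property in hand, UGS follows by a standard envelope argument: the function
\[
\gamma(r):=\sup\bigl\{\|\phi(t,t_0,x_0,u)\|_{\X}:t\ge t_0\ge 0,\ \|x_0\|_{\X}\le r,\ \|u\|_{\U}\le r\bigr\}
\]
is nondecreasing, finite by UGB, and satisfies $\gamma(r)\to 0$ as $r\to 0^+$ by the property just established; overbounding $\gamma$ by some $\alpha\in\Ki$ yields $\|x(t)\|_{\X}\le\alpha(\max(\|x_0\|_{\X},\|u\|_{\U}))\le\alpha(\|x_0\|_{\X})+\alpha(\|u\|_{\U})$, which is exactly a UGS estimate. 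The principal obstacle is the self-consistency of the iteration: the parameters $\delta_0$, $T$, $r$ and $\tilde\delta_0$ must be coordinated so that both the a~priori bound $\eta$ on every interval and the endpoint recurrence $\|x(t_{k+1})\|_{\X}\le\delta_0$ are preserved, so that the argument can be replayed with identical parameters on every subsequent interval, thereby delivering a bound uniform in $t\ge t_0$ and in $t_0\ge 0$, rather than only on the initial interval $[t_0,t_0+T]$.
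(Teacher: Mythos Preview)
Your proof is correct and follows essentially the same strategy as the paper: an induction over intervals of length $T$ showing that small initial state and small input keep the trajectory small forever, followed by the standard envelope argument to produce the $\Ki$ functions. The only cosmetic difference is that the paper packages the single-interval estimate (bounding $\|x(t)\|$ by $\beta(\|x\|_{\X},t-t_0)+\eta$ when the input is small) into a separate lemma (Lemma~\ref{lem:main}) and uses a fixed radius $r^*=\rho(1)+\alpha(1)+c$, whereas you apply Assumption~\ref{as:transition-map} directly and take $r=c+\alpha(\delta_0)+\rho(\delta_0)$; both choices work and the underlying iteration is identical.
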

The proof of Lemma~\ref{lem:o-guas+UGR:UGS} is given in Appendix~\ref{app:proof-lem:o-guas+UGR:UGS}.
\begin{lema}
  \label{lem:main}
  Let $\Sigma$ be a forward complete 0-GUAS system endowed with an admissible functional $\|\cdot\|_{\U}$. Let Assumption~\ref{as:transition-map} hold. Then, for every $r>0$, $\eta>0$ and $T>0$, there exists $\gamma=\gamma(r,\eta,T)>0$ such that if $\|\phi(t,t_{0},x,u)\|_{\mathcal{X}}\leq r$ for all $t\in[t_{0},t_{0}+T]$ and $\|u\|_{\U}\leq \gamma$ then
\begin{align} \label{eq:beta-bound}
\|\phi(t,t_{0},x,u)\|_{\mathcal{X}}\leq\beta(\|x\|_{\mathcal{X}},t-t_{0})+ \eta \mbox{, $\forall t\in[t_{0},t_{0}+T]$},
\end{align}
where $\beta\in \KL$ is the function given by the definition of $0$-GUAS. 
\end{lema}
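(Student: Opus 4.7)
The plan is to compare the trajectory $x(t)=\phi(t,t_0,x,u)$ with the zero-input trajectory $z(t)=\phi(t,t_0,x,\zi)$ and use Assumption~\ref{as:transition-map} to make $\|x(t)-z(t)\|_{\X}$ arbitrarily small by taking $\|u\|_{\U}$ small enough. The desired bound~(\ref{eq:beta-bound}) will then follow at once from the triangle inequality, since $0$-GUAS provides the estimate $\|z(t)\|_{\X}\le\beta(\|x\|_{\X},t-t_0)$.

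First I would fix $r>0$, $\eta>0$, $T>0$ and observe that under the hypothesis $\|x(t)\|_{\X}\le r$ for all $t\in[t_0,t_0+T]$, we have in particular $\|x\|_{\X}=\|x(t_0)\|_{\X}\le r$. By $0$-GUAS, $z(\cdot)$ is defined on $[t_0,\infty)$ and satisfies $\|z(t)\|_{\X}\le\beta(r,0)$ for all $t\ge t_0$. Setting $R:=\max\{r,\beta(r,0)\}$, both trajectories therefore lie in $B_R$ throughout $[t_0,t_0+T]$. This enlargement of the radius from $r$ to $R$ is the key bookkeeping step: it ensures that the comparison trajectory is simultaneously controlled by the same radius that will be fed into Assumption~\ref{as:transition-map}.

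Next, I would apply Assumption~\ref{as:transition-map} with parameters $(R,\eta,T)$ to obtain $\delta=\delta(R,\eta,T)>0$, and set $\gamma(r,\eta,T):=\delta$. For any $t_0\ge 0$, any $x$ with $\|x\|_{\X}\le r$, and any $u\in\U$ with $\|u\|_{\U}\le\gamma$, I would then invoke the Assumption with $t^*=t_0+T$: since $x(t),z(t)\in B_R$ on $[t_0,t^*]$, the Assumption yields $\|z(t)-x(t)\|_{\X}\le\eta$ for all $t\in[t_0,t_0+T]$. Combining this with the $0$-GUAS estimate for $z$ via the triangle inequality gives
\begin{equation*}
\|x(t)\|_{\X}\le\|z(t)\|_{\X}+\|x(t)-z(t)\|_{\X}\le\beta(\|x\|_{\X},t-t_0)+\eta
\end{equation*}
for all $t\in[t_0,t_0+T]$, as required.

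I do not expect any real obstacle; the only subtle point is that the uniformity in the initial time built into Assumption~\ref{as:transition-map} is precisely what allows $\gamma$ to depend on $(r,\eta,T)$ alone, independently of $t_0$.
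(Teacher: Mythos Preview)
Your proof is correct and follows essentially the same approach as the paper: enlarge the radius so that both the input-driven trajectory and the zero-input trajectory are simultaneously bounded, invoke Assumption~\ref{as:transition-map} with that enlarged radius and $\varepsilon=\eta$, and conclude by the triangle inequality combined with the $0$-GUAS estimate. The only cosmetic difference is that the paper uses $r^*=\beta(r,0)$ (noting $r\le\beta(r,0)$, which follows from the identity axiom applied to the $0$-GUAS bound) whereas you take $R=\max\{r,\beta(r,0)\}$; these choices are equivalent.
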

The proof of Lemma~\ref{lem:main} is given in Appendix~\ref{app:proof-lem:main}.

We are now ready to provide the proof of our main result.
\begin{proof}[{\bf Proof of Theorem \ref{thm:main}}]
(\ref{item:b}) $\Rightarrow$ (\ref{item:a}) is straightforward. We next prove (\ref{item:a}) $\Rightarrow$ (\ref{item:b}). 

Assume (\ref{item:a}). We prove iISS using Theorem~\ref{thm:ISS-e-d} and taking into account that ISS means iISS in this case (Definition~\ref{def:iISS}) given that the admissible input functional satisfies condition (E). From Lemma \ref{lem:o-guas+UGR:UGS} we have that $\Sigma$ is UGS and therefore (Definition~\ref{def:iISS}) UBEBS0. Let $\alpha, \rho \in \Ki$ be the functions given by the definition of UGS. 

Let $T>0$, $r>0$ and $s>0$. Let $t_{0}\ge 0$, $x\in \X$ such that $\|x\|_{\X}\le r$ and $u\in\mathcal{U}$ with $\|u\|_{\U}\leq s$. Then, due to UGS we have that for all $t\in[t_{0},t_{0}+T]$,
\begin{eqnarray*}
\|\phi(t,t_{0},x,u)\|_{\mathcal{X}}&\leq &\alpha(\|x\|_{\mathcal{X}})+\rho\left(\|u\|_{\U}\right)\\
&\leq &\alpha(r)+\rho\left(s\right).
\end{eqnarray*}
Therefore, C\ref{item:ubrs}) holds with $C=\alpha(r)+\rho\left(s\right)$.

Let $\varepsilon>0$. Pick $\delta>0$ such that $\alpha(\delta)+\rho\left(\delta\right)\le\varepsilon$. Then, if $t_0\ge 0$, $x\in \X$ with  $\|x\|_{\mathcal{X}}\leq \delta$ and $u\in\mathcal{U}$ with $\|u\|_{\U}\leq \delta$, it follows that for all $t\ge t_0$
\begin{eqnarray*}
\|\phi(t,t_{0},x,u)\|_{\mathcal{X}}&\leq &\alpha(\|x\|_{\mathcal{X}})+\rho\left(\|u\|_{\U}\right)\\
&\leq &\alpha(\delta)+\rho\left(\delta\right)\\
&<&\varepsilon,
\end{eqnarray*}
and thus C\ref{item:smallx0u}) holds.

Next, we prove C\ref{item:attract}). Define $\nu\in\Ki$ via $\nu=2\rho$ and let $\psi=\rho^{-1}\comp\alpha$. Let $r\geq\varepsilon>0$, $t_{0}\ge 0$, $x\in \X$ be such that $\|x\|_{\mathcal{X}}\leq r$ and $u\in\mathcal{U}$. Distinguish the cases
\begin{enumerate}[(i)]
\item \label{item:i}$\|u\|_{\U}\ge\psi(r)$; and
\item \label{item:ii}$\|u\|_{\U}<\psi(r)$. 
\end{enumerate}
%
In case (\ref{item:i}), we have 
\begin{eqnarray*}
\|\phi(t,t_{0},x,u)\|_{\mathcal{X}}&\leq &\alpha(\|x\|_{\mathcal{X}})+\rho\left(\|u\|_{\U}\right)\\
&\leq &\alpha(r)+\rho\left(\|u\|_{\U}\right)\\
&\leq &\alpha(\psi^{-1}(\|u\|_{\U}))+\rho\left(\|u\|_{\U}\right)\\
&=&\rho\left(\|u\|_{\U}\right)+\rho\left(\|u\|_{\U}\right)\\
&=&2\rho(\|u\|_{\U})\\
&=&\nu(\|u\|_{\U})
\end{eqnarray*} 
So, for every $\varepsilon>0$ and $T>0$, it happens that $\|\phi(t,t_{0},x,u)\|_{\mathcal{X}}\leq\varepsilon+\nu(\|u\|_{\U})$ for every $t\geq t_{0}+T$.

In case (\ref{item:ii}), we have 
\begin{eqnarray*}
\|\phi(t,t_{0},x,u)\|_{\mathcal{X}}&\leq &\alpha(\|x\|_{\mathcal{X}})+\rho\left(\|u\|_{\U}\right)\\
&\leq &\alpha(r)+\rho\left(\|u\|_{\U}\right)\\
&\leq &\alpha(r)+\rho\left(\psi(r)\right)=\tilde{r} 
\end{eqnarray*}
So $\|\phi(t,t_{0},x,u)\|_{\mathcal{X}}\leq\tilde{r}$ for all $t\geq t_{0}$. Let $\tilde{\varepsilon}=\alpha^{-1}(\varepsilon)$ and $\eta=\tilde{\varepsilon}/2$. Pick $\tilde{T}>0$  such that $\beta(\tilde{r},\tilde{T})<\tilde{\varepsilon}/2$, where $\beta\in\KL$ is given by 0-GUAS. By Lemma~\ref{lem:main}, there exists $\gamma = \gamma(\tilde{r}, \eta, \tilde{T}) > 0$ such that (\ref{eq:beta-bound}) holds, with $\tilde{T}$ instead of $T$, provided that $\|u\|_{\U}\leq\gamma$. 

Define $N=\left\lceil\frac{\psi(r)}{\gamma}\right\rceil$ and $T=N\tilde{T}$, where $\lceil s \rceil$ denotes the smallest integer not less than $s\in\mathbb{R}$.

For $i=0,\ldots,N$, let $t_{i}=t_{0}+i\tilde{T}$. We consider the intervals $I_{i}=(t_{i},t_{i+1}]$ with $i=0,\ldots,N-1$ and claim that there exists an integer $j\leq N-1$ for which $\|u_{(t_{j},t_{j+1}]}\|_{\U}<\gamma$. If such a $j$ did not exist, then from the definition of $N$ and condition (E), it would follow that $\|u\|_{\U}\ge \|u_{(t_0,T]}\|_{\U}\ge \sum_{i=0}^{N-1}\|u_{(t_{i},t_{i+1}]}\|_{\U}\ge N\gamma \ge \psi(r)$, which contradicts case (\ref{item:ii}). 

Pick $j$ such that $\|u_{(t_{j},t_{j+1}]}\|_{\U}<\gamma$ and define $u_{j}=u_{(t_j,t_{j+1}]}$ and $x_j=\phi(t_j,t_0,x,u)$. 
By the causality and semigroup properties, $\phi(t,t_{0},x,u)=\phi(t,t_{j},x_j, u_{j})$ for all $t\in [t_{j},t_{j+1}]$. Since $\|\phi(t,t_{0},x,u)\|_{\mathcal{X}}\leq\tilde{r}$ for all $t\ge t_0$, we have that $\|\phi(t,t_{j},x_j,u_j)\|_{\mathcal{X}}\leq\tilde{r}$ for all $t\in [t_j,t_{j+1}]$. From the facts that $\|u_j\|_{\U}\le \gamma$ and the definition of $\gamma$ it follows that if $x_{j+1}=\phi(t_{j+1},t_{j},x_j,u_{j})$, then
\begin{eqnarray*}
\|x_{j+1}\|_{\mathcal{X}}&= &\|\phi(t_{j+1},t_{j},x_j,u_{j})\|_{\mathcal{X}}\\
&\leq &\beta(\|x_j\|_{\mathcal{X}},\tilde{T})+\eta\\
&\leq &\beta(\tilde{r},\tilde{T})+\eta\\
&\leq &\frac{\tilde{\varepsilon}}{2}+\frac{\tilde{\varepsilon}}{2}=\tilde{\varepsilon}
\end{eqnarray*}
Therefore, since $\phi(t,t_{0},x,u)=\phi(t,t_{j+1},x_{j+1}, u)$ for all $t\ge t_{j+1}$ and recalling the UGS property, it follows that for all $t\ge t_0+T \ge t_{j+1}$, 
\begin{eqnarray*}
\|\phi(t,t_{0},x,u)\|_{\mathcal{X}}&\leq &\alpha(\|x_{j+1}\|_{\mathcal{X}})+\rho\left(\|u\|_{\U}\right)\\
&\leq &\alpha(\tilde{\varepsilon})+\rho\left(\|u\|_{\U}\right)\\
&=&\varepsilon+\rho\left(\|u\|_{\U}\right)\\
&\le&\varepsilon+\nu\left(\|u\|_{\U}\right).
\end{eqnarray*}
This shows that C\ref{item:attract}) is satisfied. By Theorem~\ref{thm:ISS-e-d}, the system $\Sigma$ is $\|\cdot\|_{\U}$-ISS and hence $\|\cdot\|_{\U}$-iISS from Definition~\ref{def:iISS}.
\end{proof}

\section{Time-delay systems}
\label{sec:time-delay}

In this section, we consider time-delay systems with inputs. For $\tau\ge 0$ (where $\tau$ is larger than, or equal to, the maximum delay involved in the dynamics), let $\mathcal{C}=\mathcal{C}\left([-\tau,0],\mathbb{R}^{n}\right)$ be the set of continuous functions $\psi:[-\tau,0]\to \R^n$ endowed with the supremum norm $\|\psi\|=\displaystyle\sup\{|\psi(s)|:s\in [-\tau ,0]\}$. As usual, given a continuous function $x:[t_0-\tau,T)\to \R^n$ and any $t_0\le t<T$, $x_t$ is defined as the function $x_t:[-\tau,0]\to \R^n$ satisfying $x_{t}(s)=x(t+s)$ for all $s\in [-\tau,0]$, so that $x_t\in \mathcal{C}$.

Consider the system with inputs defined by the following retarded functional differential equation
\begin{align}\label{eq:sistr}
\dot{x}(t)=f(t,x_{t},u(t))
\end{align}
where $t\ge 0$, $x(t)\in \R^n$, $u(t) \in \R^m$ and $f:\R_{\geq 0}\times\mathcal{C}\times\R^{m}\to\R^{n}$. In this section, $\U$ denotes the set of all the functions $u:[0,\infty)\to \R^m$ that are locally bounded and Lebesgue measurable.

We assume that $f(t,\cdot,\cdot)$ is continuous for every $t\ge 0$, that $f(\cdot,\psi,\mu)$ is Lebesgue measurable for every $(\psi,\mu)\in \mathcal{C}\times \R^m$, and that for every $t_0\ge 0$, $\psi \in \mathcal{C}$ and $u\in \U$, there exists a unique maximally defined continuous function $x:[t_0-\tau,t_{(t_0,\psi,u)})\to \R^n$, with $t_{(t_0,\psi,u)}>t_0$ and $x_{t_0}=\psi$, that is locally absolutely continuous on $[t_0,t_{(t_0,\psi,u)})$ and satisfies equation (\ref{eq:sistr}) for almost all $t\in [t_0,t_{(t_0,\psi,u)})$.

Under these assumptions, take $\X=\mathcal{C}$, $\|\cdot\|_{\X} = \|\cdot\|$ and define the map $\phi:D_{\phi}\to \X$, with $D_{\phi}=\{(t,s,\psi,u)\in \R_{\ge 0}\times \R_{\ge 0} \times \mathcal{C} \times \U : s\le t <t_{(s,\psi,u)}\}$ and $\phi(t,s,\psi,u)=x_t$, where $x:[s-\tau,t_{(s,\psi,u)})\to \R^n$ is the unique maximally defined solution of (\ref{eq:sistr}) corresponding to the initial time $s$, the initial state $\psi$ and input $u$. Then, $\Sigma^{R}=(\X,\U,\phi)$ is a system as per Definition~\ref{def:system}.

For a system of the form (\ref{eq:sistr}), the $0$-GUAS, UBEBS, and iISS properties are usually defined as follows (see e.g. \cite{chagok_tac21}).
\begin{defin}
  \label{def:delay-stab}
  The time-delay system (\ref{eq:sistr}) is:
\begin{enumerate}
\item $0$-GUAS if there exists $\beta \in \KL$ such that the solution $x(\cdot)$ corresponding to any $t_0\ge 0$, $\psi \in \C$ and $u=\zi$ satisfies
\begin{align} \label{eq:a}
|x(t)|\le \beta(\|\psi\|,t-t_0)\quad \forall t\ge t_0;
\end{align}
\item UBEBS if there exist $\alpha, \rho, \kappa \in \Ki$ and $c\ge 0$ such that 
\begin{align} \label{eq:b}
{\color{black} |x(t)|\le \alpha(\|\psi\|)+\rho(\|u_{(t_0,t]}\|_{\kappa})+c\quad \forall t\ge t_0;}
\end{align}
\item iISS if there exist $\beta \in \KL$ and $\rho,\kappa\in\Ki$ such that
\begin{align} \label{eq:c}
{\color{black} |x(t)|\le \beta(\|\psi\|,t-t_0)+ \rho(\|u_{(t_0,t]}\|_{\kappa})\quad \forall t\ge t_0.}
\end{align}
\end{enumerate}
In~(\ref{eq:b}) and~(\ref{eq:c}), $x(\cdot)$ is the solution corresponding to initial time $t_0\ge 0$, initial state $\psi\in \C$ and input $u\in \U$, and $\kappa$ is referred to as the UBEBS or iISS gain, respectively.
\end{defin}

These definitions are equivalent to those corresponding to Definitions~\ref{def:stability-def} and~\ref{def:iISS}, as we next show.
\begin{prop}
  \label{prop:delay-general-equiv}
  Consider a time-delay system of the form~(\ref{eq:sistr}) and its corresponding system $\Sigma^R$ as defined above. Then,
  \begin{enumerate}[a)]
  \item System~(\ref{eq:sistr}) is 0-GUAS as per Definition~\ref{def:delay-stab} $\Leftrightarrow$ $\Sigma^R$ is 0-GUAS as per Definition~\ref{def:stability-def}.\label{item:td1}
  \item System~(\ref{eq:sistr}) is UBEBS as per Definition~\ref{def:delay-stab} $\Leftrightarrow$ $\Sigma^R$ is UBEBS as per Definitions~\ref{def:iISS} and~\ref{def:stability-def}.\label{item:td2}
  \item System~(\ref{eq:sistr}) is iISS as per Definition~\ref{def:delay-stab} $\Leftrightarrow$ $\Sigma^R$ is iISS as per Definitions~\ref{def:iISS} and~\ref{def:stability-def}. \label{item:td3}
  \end{enumerate}
\end{prop}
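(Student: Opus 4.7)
The plan is to obtain each equivalence from the elementary two-sided relation
\[
  |x(t)|\le \|x_t\| \le \sup\bigl\{|x(s)|:s\in [t-\tau,t]\cap [t_0,\infty)\bigr\}+\|\psi\|\,\mathbf{1}_{\{t<t_0+\tau\}},
\]
valid for every $t\ge t_0$; the second inequality bounds the portion of the window $[t-\tau,t]$ lying in $[t_0-\tau,t_0)$, on which $x$ equals a time-shifted copy of $\psi$. The $\Leftarrow$ directions of~\ref{item:td1}), \ref{item:td2}) and~\ref{item:td3}) are then immediate from $|x(t)|\le \|x_t\|$, since the bounds of Definitions~\ref{def:stability-def} and~\ref{def:iISS} are stated in terms of $\|\phi(t,t_0,\psi,u)\|_\X=\|x_t\|$. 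For~\ref{item:td2}) and~\ref{item:td3}), one also notes that $\|\cdot\|_\kappa$ satisfies condition~(E) (so Definition~\ref{def:iISS} applies), and that remark~\ref{item:truncateu}) after Definition~\ref{def:stability-def} authorises replacing $\|u\|_\U$ by $\|u_{(t_0,t]}\|_\kappa$ in~(\ref{eq:iss})--(\ref{eq:ugb}).

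For the $\Rightarrow$ direction of~\ref{item:td1}), starting from $|x(t)|\le \beta(\|\psi\|,t-t_0)$ and using monotonicity of $\beta$ in its second argument, for $s\in[-\tau,0]$ with $t+s\ge t_0$ one has $|x(t+s)|\le \beta(\|\psi\|,\max\{0,t-t_0-\tau\})$, while for $t+s<t_0$, $|x(t+s)|=|\psi(t+s-t_0)|\le \|\psi\|$. The resulting bound
\[
  \|x_t\|\le \beta\bigl(\|\psi\|,\max\{0,t-t_0-\tau\}\bigr)+\|\psi\|\,\mathbf{1}_{\{t<t_0+\tau\}}
\]
is not yet in $\KL$ form, so the crux is to construct $\tilde\beta\in\KL$ absorbing both terms. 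A convenient choice is
\[
  \tilde\beta(r,s):=\beta\bigl(r,\max\{0,s-\tau\}\bigr)+\bar\alpha(r)e^{-s},\qquad \bar\alpha(r):=e^\tau\bigl(r+\beta(r,0)\bigr)\in\Ki,
\]
which lies in $\KL$ and satisfies $\tilde\beta(r,s)\ge r+\beta(r,0)$ on $[0,\tau]$ and $\tilde\beta(r,s)\ge \beta(r,s-\tau)$ on $[\tau,\infty)$, yielding $\|x_t\|\le \tilde\beta(\|\psi\|,t-t_0)$.

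The $\Rightarrow$ direction of~\ref{item:td3}) repeats this slicing while carrying the additive term $\rho(\|u_{(t_0,t]}\|_\kappa)$, preserved because $s\mapsto \|u_{(t_0,s]}\|_\kappa$ is non-decreasing and so $\rho(\|u_{(t_0,t+s]}\|_\kappa)\le \rho(\|u_{(t_0,t]}\|_\kappa)$ for $s\in[-\tau,0]$ with $t+s\ge t_0$. The $\Rightarrow$ of~\ref{item:td2}) is simpler: the same slicing yields $\|x_t\|\le \alpha(\|\psi\|)+\rho(\|u_{(t_0,t]}\|_\kappa)+c+\|\psi\|\,\mathbf{1}_{\{t<t_0+\tau\}}$, absorbed by replacing $\alpha$ with $\tilde\alpha(r):=r+\alpha(r)\in\Ki$. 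Forward completeness of $\Sigma^R$ on the $\Rightarrow$ side is a free by-product: the pointwise bound of Definition~\ref{def:delay-stab}, combined with the standard continuation principle for~(\ref{eq:sistr}) (bounded solutions extend), forces $t_{(t_0,\psi,u)}=\infty$.

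The main obstacle here is nothing deep; only the bookkeeping for the transient phase $t\in[t_0,t_0+\tau)$, during which the window $x_t$ straddles $t_0$ and sees both the evolved trajectory and the initial segment $\psi$. Once the additive buffer $\bar\alpha(r)e^{-s}$ is inserted, no further novel ideas are required.
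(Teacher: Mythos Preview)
Your argument is correct and follows the same overall outline as the paper: the $\Leftarrow$ directions come immediately from $|x(t)|\le\|x_t\|$, item~\ref{item:td2}) is handled by replacing $\alpha$ with $\tilde\alpha(r)=\alpha(r)+r$, and the work is in manufacturing a new $\KL$ function $\tilde\beta$ that absorbs the $\tau$-shift for items~\ref{item:td1}) and~\ref{item:td3}).

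The one genuine difference is in how $\tilde\beta$ is built. The paper invokes Sontag's $\KL$-factorisation lemma to write $\beta(r,t)=\alpha_2(\alpha_1(r)e^{-t})$ and then sets $\tilde\beta(r,t)=\alpha_2(e^{\tau}\alpha_1(r)e^{-t})$; the assumption $\beta(r,0)\ge r$ (harmless, can always be enforced) is used so that the initial segment $|x(t+s)|\le\|\psi\|$ is already dominated by $\beta(\|\psi\|,0)$. Your construction $\tilde\beta(r,s)=\beta(r,\max\{0,s-\tau\})+e^{\tau}(r+\beta(r,0))e^{-s}$ avoids the external lemma entirely and instead adds an explicit exponentially decaying buffer to cover the transient window. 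Both are valid; your version is a touch more elementary, while the paper's yields a slightly cleaner closed-form $\tilde\beta$. Your explicit remark on forward completeness via the continuation principle is a point the paper leaves implicit.
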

\begin{proof}
The {\em if} parts are a direct consequence of the fact that $|x(t)| \le \|x_t\|$. We next prove the {\em only if} parts. The only if part of item~\ref{item:td2}) is also straightforward, since if (\ref{eq:b}) holds, then the same equation holds with $\|x_t\|$ instead of $|x(t)|$ and with the function $\tilde \alpha$, defined by $\tilde{\alpha}(r)=\alpha(r)+r$, in place of $\alpha$.

Suppose that system~(\ref{eq:sistr}) is iISS as per Definition~\ref{def:delay-stab} and let $\beta \in \KL$ and $\rho\in \Ki$ be as in~(\ref{eq:c}). Without loss of generality we can suppose that $\beta(r,0)\ge r$ for all $r\ge 0$. By Sontag's Lemma on $\KL$-functions \cite[Prop.~7]{sontag_scl98}, there exist $\alpha_1,\alpha_2\in \Ki$ such that $\beta(r,t)=\alpha_2(\alpha_1(r) e^{-t})$ for all $r,t\ge 0$. Define $\tilde \beta(r,t)=\alpha_2(e^{\tau}\alpha_1(r) e^{-t})$, then $\tilde \beta \in \KL$ and $\beta\le \tilde \beta$. Suppose that $x(\cdot)$ satisfies (\ref{eq:c}). If $t\ge t_0+\tau$, then for all $s\in [-\tau,0]$
\begin{align*}
|x(t+s)|&\le \beta(\|\psi\|,t+s-t_0)+\rho(\|u\|_{\kappa})\\
&\le \beta(\|\psi\|,t-t_0-\tau)+\rho(\|u\|_{\kappa})\\
&\le \alpha_2(e^{\tau}\alpha_1(\|\psi\|) e^{-(t-t_0)})+\rho(\|u\|_{\kappa})\\
&\le \tilde \beta(\|\psi\|,t-t_0)+\rho(\|u\|_{\kappa}).
\end{align*}
Hence $\|x_t\|\le \tilde \beta(\|\psi\|,t-t_0)+\rho(\|u\|_{\kappa})$ for all $t\ge t_0+\tau$. If $t_0\le t < t_0+\tau$, for all $s\in [-\tau,0]$
\begin{align*}
|x(t+s)|&\le \beta(\|\psi\|,0)+\rho(\|u\|_{\kappa})\\
&\le \alpha_2(\alpha_1(\|\psi\|))+\rho(\|u\|_{\kappa})\\
&\le \alpha_2(e^{\tau}\alpha_1(\|\psi\|)e^{-(t-t_0)})+\rho(\|u\|_{\kappa})\\
&\le \tilde \beta(\|\psi\|,t-t_0)+\rho(\|u\|_{\kappa}).
\end{align*}
In this case, we have that $\|x_t\|\le \tilde \beta(\|\psi\|,t-t_0)+\rho(\|u\|_{\kappa})$ for all $t_0\le t< t_0+\tau$. Thus $\Sigma^R$ is iISS as per Definitions~\ref{def:iISS} and~\ref{def:stability-def}.

The only if part of item~\ref{item:td1}) can be proved in the same way.
\end{proof}

Assumption~\ref{as:f-delay} gives sufficient conditions on the function $f$ in (\ref{eq:sistr}) for iISS to be equivalent to $0$-GUAS $\wedge$ UBEBS.
\begin{as}
  \label{as:f-delay}
  The function $f$ in (\ref{eq:sistr}) satisfies the following conditions.
  \begin{enumerate}
  \item[(R1)] \label{item:r1} There exists $\gamma\in\Ki$ and $N:\mathbb{R}_{\geq 0}\rightarrow\mathbb{R}_{> 0}$ non-decreasing such that
    \begin{equation} \label{eq:r1}
      |f(t,\psi,\mu)|\leq N(\|\psi\|)\left(1+\gamma(|\mu|)\right)
    \end{equation}
    for all $t\geq 0$, for every $\psi\in\mathcal{C}$ and for all $\mu\in\mathbb{R}^{m}$.
  \item[(R2)] \label{item:r2} For every $r>0$ and $\varepsilon >0$, there exists $\delta >0$ such that for all $t\geq 0$, it is true that 
    \begin{equation*}
      |f(t,\psi,\mu)-f(t,\psi,0)|<\varepsilon
    \end{equation*}
    if $\|\psi\|\leq r$ and $|\mu|\leq\delta$.
  \item[(R3)] \label{item:r3} $f(t,\psi,0)$ is Lipschitz in $\psi$ on bounded sets, uniformly in $t\ge 0$, i.e., for all $r>0$ there exists $L=L(r)$ such that $|f(t,\psi,0)-f(t,\varphi,0)|\leq L\|\psi-\varphi\|$ for all $t\ge 0$ whenever $\|\psi\| \le r$ and $\|\varphi\| \le r$.
\end{enumerate}
\end{as}
{\color{black} \begin{rem} \label{rem:bound} Condition (R1) is equivalent to the existence of $\hat \gamma\in\Ki$ and $\hat N:\mathbb{R}_{\geq 0}\rightarrow\mathbb{R}_{> 0}$ non-decreasing such that
    \begin{equation} \label{eq:r1p}
      |f(t,\psi,\mu)|\leq \hat N(\|\psi\|)+\hat \gamma(|\mu|)
    \end{equation}
    for all $t\geq 0$, for every $\psi\in\mathcal{C}$ and for all $\mu\in\mathbb{R}^{m}$. This is because if (\ref{eq:r1}) holds, then using the fact that $N\gamma\le (N^2+\gamma^2)/2$ it follows that (\ref{eq:r1p}) holds with $\hat N(r)=N(r)+\frac{N(r)^2}{2}$ and $\hat \gamma(r)=\frac{\gamma(r)^{2}}{2}$. Conversely, if (\ref{eq:r1p}) holds, then (\ref{eq:r1}) holds with $N(r)=\max\left\{\hat N(r),\frac{ \hat N(r)}{\hat N(0)} \right\}$ and $\gamma(r)=\hat \gamma(r)$, because $\hat N(0) > 0$ and $\hat N(r)/\hat N(0) \ge 1$.
\end{rem}
    }
    
The following lemma, whose proof can be obtained, {\em mutatis mutandis}, from that of Lemma~1 in \cite{haiman_tac18}, asserts that Assumption~\ref{as:f-delay} holds if $f(t,0,0)=0$ for all $t\ge 0$ and $f$ satisfies a Lipschitz condition on bounded sets.
\begin{lema} \label{lem:f-lips-delay} Suppose that $f:\R_{\ge 0}\times \C \times \R^m\to \R^n$ is Lipschitz on bounded subsets of $\C \times \R^m$, uniformly in $t$, i.e. for all $r\ge 0$ there exists $L=L(r)\ge 0$ such that for all $\psi,\theta \in \C$ such that $\|\psi\|\le r$ and $\|\theta\|\le r$ and all $\mu,\nu\in \R^m$ with $|\mu|\le r$ and $|\nu|\le r$ we have that
\begin{align*}
|f(t,\psi,\mu)-f(t,\theta,\nu)|\le L(\|\psi-\theta\|+|\mu-\nu|)\quad \forall t\ge 0.
\end{align*}
Suppose in addition that $f(t,0,0)=0$ for all $t\ge 0$. Then $f$ satisfies Assumption \ref{as:f-delay}.
\end{lema}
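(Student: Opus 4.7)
The plan is to verify each of the three conditions (R1), (R2), (R3) in Assumption~\ref{as:f-delay} in turn, exploiting only the uniform local Lipschitz property of $f$ together with $f(t,0,0)=0$. Without loss of generality I can replace the function $L(\cdot)$ by a continuous, non-decreasing, positive majorant (any non-decreasing function admits such a majorant), so that the bounds that will appear are automatically continuous in their arguments. Conditions (R3) and (R2) are essentially immediate and I would dispatch them first.

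For (R3), taking $\mu=\nu=0$ in the uniform-in-$t$ Lipschitz estimate yields $|f(t,\psi,0)-f(t,\varphi,0)|\le L(r)\|\psi-\varphi\|$ whenever $\|\psi\|,\|\varphi\|\le r$, which is exactly (R3). For (R2), taking $\theta=\psi$ and $\nu=0$ gives $|f(t,\psi,\mu)-f(t,\psi,0)|\le L(r)|\mu|$ whenever $\|\psi\|\le r$ and $|\mu|\le r$; for the prescribed $r,\varepsilon>0$ one then sets $\delta:=\min\{r,\varepsilon/(L(r)+1)\}$ and the required inequality follows uniformly in $t\ge 0$.

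The real work lies in establishing (R1), because the natural Lipschitz bound entangles $\|\psi\|$ and $|\mu|$ through the joint radius $r$. The key observation, which simplifies matters, is Remark~\ref{rem:bound}: it suffices to produce an \emph{additive} estimate of the form $|f(t,\psi,\mu)|\le \hat N(\|\psi\|)+\hat\gamma(|\mu|)$ with $\hat N$ non-decreasing and positive and $\hat\gamma\in\Ki$. Using $f(t,0,0)=0$ and the Lipschitz bound applied with $r=\|\psi\|+|\mu|$ (which certainly majorizes both $\|\psi\|$ and $|\mu|$), I get
\begin{align*}
|f(t,\psi,\mu)|\le L(\|\psi\|+|\mu|)\,(\|\psi\|+|\mu|).
\end{align*}
Since $\|\psi\|+|\mu|\le 2\max\{\|\psi\|,|\mu|\}$ and $L$ is non-decreasing, $L(\|\psi\|+|\mu|)\le L(2\|\psi\|)+L(2|\mu|)$; expanding the product leaves the cross terms $L(2\|\psi\|)|\mu|$ and $L(2|\mu|)\|\psi\|$, which I would separate by Young's inequality $ab\le (a^2+b^2)/2$. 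Collecting terms gives exactly the desired additive bound with
\begin{align*}
\hat N(s)=L(2s)\,s+\tfrac{1}{2}L(2s)^2+\tfrac{1}{2}s^2+1,\qquad \hat\gamma(s)=L(2s)\,s+\tfrac{1}{2}L(2s)^2+\tfrac{1}{2}s^2,
\end{align*}
after which a continuous $\Ki$-majorant of $\hat\gamma$ (adding, say, $s$) produces the required class.

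The principal technical obstacle is purely this decoupling step, since the Lipschitz constant depends on the joint size of $(\psi,\mu)$; once Young's inequality and monotonicity of $L$ handle it, the additive-to-multiplicative passage is granted by Remark~\ref{rem:bound}, and the proof follows the same template as \cite[Lemma~1]{haiman_tac18} adapted to the time-delay setting.
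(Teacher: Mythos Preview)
Your approach is correct and coincides with the paper's, which gives no proof but points to \cite[Lemma~1]{haiman_tac18}; your argument is exactly the adaptation of that lemma to the retarded setting, verifying (R3), (R2) and then (R1) via the additive bound of Remark~\ref{rem:bound}.

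One small slip to repair in the (R1) step: since you replaced $L$ by a \emph{positive} majorant, your candidate $\hat\gamma(s)=L(2s)s+\tfrac12 L(2s)^2+\tfrac12 s^2$ has $\hat\gamma(0)=\tfrac12 L(0)^2>0$, so it is not of class $\K$, and taking a $\Ki$-majorant cannot fix this. The remedy is immediate: absorb the constant $\tfrac12 L(0)^2$ into $\hat N$ and work with $\hat\gamma(s)-\hat\gamma(0)$. Alternatively, you can avoid Young's inequality altogether: from $\|\psi\|+|\mu|\le 2\max\{\|\psi\|,|\mu|\}$ and monotonicity of $L$ one gets directly
\[
L(\|\psi\|+|\mu|)\,(\|\psi\|+|\mu|)\le 2L(2\|\psi\|)\,\|\psi\|+2L(2|\mu|)\,|\mu|,
\]
so that $\hat N(s)=2L(2s)s+1$ and $\hat\gamma(s)=2L(2s)s$ (or its $\Ki$-majorant $2L(2s)s+s$) already do the job with $\hat\gamma(0)=0$.
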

\begin{teo}
  \label{thm:iISS-ubebs+0-guas}
  Consider system (\ref{eq:sistr}) and let Assumption \ref{as:f-delay} hold. Let $\gamma\in \Ki$ be given by (R1). Then, the following hold.
  \begin{enumerate}
  \item[a)] If system (\ref{eq:sistr}) is iISS with gain $\kappa$, then it is 0-GUAS and UBEBS with gain $\kappa$.
    
  \item[b)] If system (\ref{eq:sistr}) is $0$-GUAS and UBEBS with gain $\alpha$, then it is iISS with gain $\kappa=\max\{\alpha,\gamma\}$.
  \end{enumerate}
\end{teo}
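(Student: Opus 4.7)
Part (a) is immediate: setting $u=\mathbf{0}$ in~(\ref{eq:c}) recovers~(\ref{eq:a}), while (\ref{eq:c}) itself, evaluated with $\beta(\|\psi\|,0)$ in place of $\beta(\|\psi\|,t-t_0)$, is exactly the UBEBS bound~(\ref{eq:b}) with the same gain $\kappa$ and $c=0$. The substance lies in part (b). My plan is to apply Theorem~\ref{thm:main} to the abstract system $\Sigma^R$ endowed with the admissible functional $\|\cdot\|_\kappa$ for $\kappa=\max\{\alpha,\gamma\}$. The functional $\|\cdot\|_\kappa$ satisfies condition (E) with equality, forward completeness of $\Sigma^R$ is inherent to the UBEBS hypothesis (which provides a bound for every $t\ge t_0$ together with (R1)), and Proposition~\ref{prop:delay-general-equiv} translates 0-GUAS, UBEBS and iISS back and forth between the two formulations. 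Since $\kappa\ge\alpha$ implies $\|u\|_\alpha\le\|u\|_\kappa$, UBEBS with gain $\alpha$ at once lifts to UBEBS with gain $\kappa$, so the only nontrivial task is to verify Assumption~\ref{as:transition-map} for $\Sigma^R$ with functional $\|\cdot\|_\kappa$.

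To do so, I would fix $r,\varepsilon,T>0$ and consider solutions $x(\cdot)$ and $z(\cdot)$ of~(\ref{eq:sistr}) corresponding to $(t_0,\psi,u)$ and $(t_0,\psi,\mathbf{0})$ respectively, with $\|u\|_\kappa$ small and with $\|x_s\|,\|z_s\|\le r$ on some $[t_0,t^*]$, $t^*-t_0\le T$. These solutions coincide on $[t_0-\tau,t_0]$, so $\|x_t-z_t\|\le M(t):=\sup_{s\in[t_0,t]}|x(s)-z(s)|$. Writing
\[
x(t)-z(t)=\int_{t_0}^t\bigl[f(s,x_s,u(s))-f(s,x_s,0)\bigr]\,ds+\int_{t_0}^t\bigl[f(s,x_s,0)-f(s,z_s,0)\bigr]\,ds
\]
and bounding the second integrand by $L(r)\|x_s-z_s\|\le L(r)M(s)$ via (R3), Gronwall's inequality yields $M(t^*)\le I\,e^{L(r)T}$, where $I:=\int_{t_0}^{t^*}|f(s,x_s,u(s))-f(s,x_s,0)|\,ds$.

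The main obstacle is controlling $I$ uniformly in terms of $\|u\|_\kappa$, and it is here that all three conditions in Assumption~\ref{as:f-delay} combine. I would split $[t_0,t^*]$ using a threshold $\delta_0>0$ chosen via (R2) so that the integrand is at most a prescribed $\varepsilon_0>0$ on $B=\{s:|u(s)|\le\delta_0\}$ (using $\|x_s\|\le r$), contributing at most $\varepsilon_0 T$. On the complementary set $A=\{s\in[t_0,t^*]:|u(s)|>\delta_0\}$, (R1) bounds the integrand by $2N(r)+N(r)\gamma(|u(s)|)$; the measure estimate $|A|\le\|u\|_\kappa/\kappa(\delta_0)$, together with the crucial inequality $\int_A\gamma(|u(s)|)\,ds\le\int_{t_0}^{t^*}\kappa(|u(s)|)\,ds\le\|u\|_\kappa$ available precisely because $\gamma\le\kappa$, yields a total bound of the form $I\le\varepsilon_0 T+C(r,\delta_0)\|u\|_\kappa$. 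Choosing $\varepsilon_0$ small and then an upper bound on $\|u\|_\kappa$ so that $I\,e^{L(r)T}\le\varepsilon$ confirms Assumption~\ref{as:transition-map}. Theorem~\ref{thm:main} then delivers iISS of $\Sigma^R$ with functional $\|\cdot\|_\kappa$, and Proposition~\ref{prop:delay-general-equiv} translates this into iISS with gain $\kappa$ of the time-delay system.
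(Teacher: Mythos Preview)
Your proposal is correct and follows essentially the same route as the paper: reduce to Theorem~\ref{thm:main} via Proposition~\ref{prop:delay-general-equiv}, lift the UBEBS gain from $\alpha$ to $\kappa=\max\{\alpha,\gamma\}$, and verify Assumption~\ref{as:transition-map} by combining (R1)--(R3) with Gronwall. The only cosmetic difference is packaging: the paper first derives the pointwise bound $|f(t,\psi,\mu)-f(t,\psi,0)|\le\eta+k\gamma(|\mu|)$ (its Claim in the proof of Lemma~\ref{lem6}) and then integrates, which is exactly your small/large-$|u|$ splitting written pointwise; and the paper establishes Assumption~\ref{as:transition-map} for $\|\cdot\|_\gamma$ first (Lemma~\ref{lem:teosigrh}) and then observes it lifts to $\|\cdot\|_\kappa$, whereas you work with $\|\cdot\|_\kappa$ directly.
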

The proof of Theorem \ref{thm:iISS-ubebs+0-guas} is a consequence of Theorem~\ref{thm:main} and the following lemma.
\begin{lema}
  \label{lem:teosigrh}
  Let Assumption~\ref{as:f-delay} hold and let $\gamma\in \K$ be given by (R1). Then, system $\Sigma^{R}$ satisfies Assumption~\ref{as:transition-map} with $\|\cdot\|_{\U}=\|\cdot\|_{\gamma}$.
\end{lema}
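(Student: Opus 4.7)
Fix $r,\varepsilon,T>0$ and consider arbitrary $t_0\ge 0$, $\psi\in\mathcal{C}$ with $\|\psi\|\le r$, $u\in\mathcal{U}$, and $t^*\in[t_0,t_0+T]$ such that the $\mathbb{R}^n$-valued solutions $x(\cdot)$ (input $u$) and $z(\cdot)$ (input $\mathbf{0}$) starting from $(t_0,\psi)$ satisfy $\|x_t\|\le r$ and $\|z_t\|\le r$ for all $t\in[t_0,t^*]$. Because $x$ and $z$ coincide on the initial segment $[t_0-\tau,t_0]$, the function $w(t):=\sup_{t_0\le s\le t}|x(s)-z(s)|$ dominates $\|x_t-z_t\|$ on $[t_0,t^*]$, so it suffices to control $w$.

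Integrating \eqref{eq:sistr} and subtracting gives
\begin{equation*}
x(t)-z(t)=\int_{t_0}^{t}\bigl[f(s,x_s,u(s))-f(s,x_s,0)\bigr]\,ds+\int_{t_0}^{t}\bigl[f(s,x_s,0)-f(s,z_s,0)\bigr]\,ds.
\end{equation*}
The second integrand is bounded by $L(r)\|x_s-z_s\|\le L(r)w(s)$ thanks to (R3). Taking the supremum over $[t_0,t]$ and applying Gronwall's inequality yields $w(t)\le A(u)\,e^{L(r)T}$, where $A(u)$ is any uniform upper bound on the first (``input-forcing'') integral. The real work, and the main obstacle, is to bound $A(u)$ by a quantity that tends to $0$ with $\|u\|_\gamma$, uniformly in $t_0$ and in $x_s$ on the ball of radius $r$.

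To do so I exploit (R1) and (R2) jointly. Given any threshold $\varepsilon'>0$ (to be chosen), (R2) produces $\delta'=\delta'(r,\varepsilon')>0$ such that $|f(s,x_s,\mu)-f(s,x_s,0)|\le\varepsilon'$ whenever $\|x_s\|\le r$ and $|\mu|\le\delta'$. Decompose $[t_0,t^*]=E_1\cup E_2$ by whether $|u(s)|\le\delta'$ or $|u(s)|>\delta'$. On $E_1$ the integrand is at most $\varepsilon'$; on $E_2$, (R1) bounds it by $N(r)(2+\gamma(|u(s)|))$, and since $\gamma(|u(s)|)>\gamma(\delta')$ there, the Lebesgue measure of $E_2$ is at most $\|u\|_\gamma/\gamma(\delta')$. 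Hence
\begin{equation*}
A(u)\le \varepsilon'T+\left(\frac{2N(r)}{\gamma(\delta')}+N(r)\right)\|u\|_\gamma.
\end{equation*}

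To close the argument, choose $\varepsilon':=\varepsilon\,e^{-L(r)T}/(2T)$, which fixes $\delta'$ through (R2), and then set
\begin{equation*}
\delta:=\frac{\varepsilon\,e^{-L(r)T}}{2\bigl(2N(r)/\gamma(\delta')+N(r)\bigr)}.
\end{equation*}
Whenever $\|u\|_\gamma\le\delta$, the two summands of $A(u)e^{L(r)T}$ are each at most $\varepsilon/2$, so $w(t)\le\varepsilon$ on $[t_0,t^*]$ and therefore $\|x_t-z_t\|\le\varepsilon$ there. Since $\delta$ depends only on $r$, $\varepsilon$ and $T$, this is exactly the conclusion required by Assumption~\ref{as:transition-map} for the functional $\|\cdot\|_\gamma$.
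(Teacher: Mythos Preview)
Your proof is correct and follows essentially the same route as the paper's. The paper factors the argument through two auxiliary lemmas---a pointwise bound $|f(t,\psi,\mu)-f(t,\psi,0)|\le\eta+k\gamma(|\mu|)$ obtained via the same $|\mu|\lessgtr\delta'$ split (Claim~\ref{clm:keta} inside Lemma~\ref{lem6}) and a retarded Gronwall inequality (Lemma~\ref{lem:gronwall})---whereas you perform the identical decomposition directly at the integral level and handle the delay by introducing the running supremum $w$ and applying standard Gronwall; the constants and the final choice of $\delta$ coincide.
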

The proof of Lemma~\ref{lem:teosigrh} is provided in Appendix \ref{app:p-ds}.

\begin{proof}[{\bf Proof of Theorem \ref{thm:iISS-ubebs+0-guas}}]
  Part a) is straightforward; we next prove b). 

  Assume that (\ref{eq:sistr}) is $0$-GUAS and UBEBS with gain $\alpha$. Let $\kappa=\max\{\alpha,\gamma\}\in \Ki$. Then, (\ref{eq:sistr}) is also UBEBS with gain $\kappa$ because $\|u\|_{\alpha}\le \|u\|_{\kappa}$ for all $u\in \U$. By Proposition~\ref{prop:delay-general-equiv}, $\Sigma^R$ is $\|\cdot\|_{\kappa}$-UBEBS and $0$-GUAS. From Lemma~\ref{lem:teosigrh}, $\Sigma^R$ satisfies Assumption~\ref{as:transition-map} with $\|\cdot\|_{\U} = \|\cdot\|_{\gamma}$, and hence also with $\|\cdot\|_{\U}=\|\cdot\|_{\kappa}$. By Theorem~\ref{thm:main}, $\Sigma^R$ is then $\|\cdot\|_{\kappa}$-iISS and, by Proposition~\ref{prop:delay-general-equiv}, (\ref{eq:sistr}) is iISS with gain $\kappa$.
\end{proof}
{\color{black} The equivalence between $0$-GUAS $\wedge$ UBEBS and iISS has been proved recently in \cite{chagok_tac21} ( see a) $\Leftrightarrow$ e) in Theorem 2 of \cite{chagok_tac21} ) for time-invariant time-delay systems under the stronger assumption that the function $f(x_t,u)$ is Lipschitz on bounded subsets of $\C \times \R^m$ \cite[Standing assumption~1]{chagok_tac21}. The proof of $0$-GUAS $\wedge$ UBEBS implying iISS in \cite{chagok_tac21} is based on the existence of a time-invariant, Lipschitz on bounded subsets and coercive Lyapunov-Krasovskii functional (LKF) $V$ for the zero-input system $f(x_t,0)$ \cite{pepkar_ijc13} and uses the Lipschitz condition on $f$ in an essential way ( see the proof of i) $\Rightarrow$ ii) in \cite[Proposition~3]{chagok_tac21} ).
In view of Lemma~\ref{lem:f-lips-delay}, the equivalence a) $\Leftrightarrow$ e) in \cite[Thm. 2]{chagok_tac21} becomes then a corollary of Theorem~\ref{thm:iISS-ubebs+0-guas}, but the assumptions of Theorem~\ref{thm:iISS-ubebs+0-guas} particularized to the case of time-invariant time-delay systems are clearly weaker than those of \cite[Thm. 2]{chagok_tac21}.}

  By simplifying the analysis of iISS into the separate evaluation of 0-GUAS and UBEBS, Theorem \ref{thm:iISS-ubebs+0-guas} also allows to more easily conclude that if the function $f$ in~(\ref{eq:sistr}) is time-invariant and Lipschitz on bounded subsets, then the existence of an iISS LKF with pointwise dissipation (as per \cite{chagok_tac21}) implies that the time-delay system is iISS, which is one of the important results in \cite{chagok_tac21}. Moreover, Theorem \ref{thm:iISS-ubebs+0-guas} shows that this implication still holds for time-invariant systems satisfying the weaker Assumption~\ref{as:f-delay}, {\color{black} with the derivative of $V$ considered in Dini's sense.}

\section{Semilinear systems}
\label{sec:semilinear-systems}
In this section, we apply our main result to obtain a characterization of iISS for a semilinear system of the form
\begin{align}
  \label{eq:sistsl}
  \begin{split}
    \dot{x}(t) &=Ax(t)+f(t,x(t),u(t))  \\
    x(t_{0}) &=x_{0} 
  \end{split}
\end{align}
where $t\ge 0$, $x(t)\in \X$, $\X$ a Banach space with norm $\|\cdot\|_{\X}$, $u(t)\in \Ui$, with $\Ui$ a normed space with norm $\|\cdot\|_{\Ui}$. The operator $A:D(A)\subseteq\mathcal{X}\rightarrow\mathcal{X}$ is a linear operator that generates a strongly continuous semigroup (a $C_{0}$-semigroup) $T:\R_{\ge 0} \to \mathcal{L}(\X)$, where $\mathcal{L}(\X)$ is the set of all the linear and bounded operators from $\X$ to $\X$,   and $f:\R_{\ge 0}\times \X \times \Ui\to \X$. 
The set $\U$ of admissible inputs is the set of all the piecewise continuous functions $u:\R_{\ge 0} \to \Ui$. 

Given $t_0\ge 0$, $x_0\in \X$ and $u\in \U$, consider the weak solutions of (\ref{eq:sistsl}). A function $x:J\to \X$, with $J=[t_0,\tau)$ or $[t_0,\tau]$ is a weak solution of (\ref{eq:sistsl}) if it is continuous and 
\begin{align*}
x(t)=T(t-t_0)x_0+\int_{t_0}^t T(t-s)f(s,x(s),u(s))\:ds, \quad \forall t\in J
\end{align*}
where the concept of integral is that of Bochner \cite{cazenave_1998}.

\subsection{Semilinear systems: general results}
\label{sec:semilinear-general}

The following assumptions on $f$ are required. 
\begin{as} \label{ass:f-sml}
  The function $f$ in (\ref{eq:sistsl}) satisfies the following conditions.
  \begin{enumerate}
  \item[(SL1)] $f$ is piecewise continuous in $t$ and continuous in its other variables in the following sense. There exists a strictly increasing and unbounded sequence of positive times $\{\tau_k\}_{k=1}^{\infty}$ and continuous functions $f_k:[\tau_k,\tau_{k+1}]\times \X \times \Ui \to \X$, $k=0,1,\ldots$ with $\tau_0=0$, such that $f=f_k$ on $[\tau_k,\tau_{k+1})\times \X \times \Ui$.
  \item[(SL2)] $f(t,\xi,\mu)$ is Lipschitz in $\xi$ on bounded sets, uniformly for all $t$ and for $\mu$ in bounded sets, i.e., for all $r>0$ there exists $L=L(r)\ge 0$ such that, for all $\xi,\omega \in \mathcal{X}$ such that $\|\xi\|_{\mathcal{X}}\leq r$, $\|\omega \|_{\mathcal{X}}\leq r$, all $\mu\in\Ui$ such that $\|\mu\|_{\Ui}\leq r$ and all $t\ge 0$, it holds that
    \begin{equation*}
      \|f(t,\xi,\mu)-f(t,\omega,\mu)\|_{\mathcal{X}}\leq L\|\xi-\omega\|_{\mathcal{X}}.
    \end{equation*}
  \item[(SL3)] \label{item:sl3} There exists $\gamma\in\Ki$ and $N:\mathbb{R}_{\geq 0}\rightarrow\mathbb{R}_{> 0}$ non-decreasing such that
    \begin{equation*}
      \|f(t,\xi,\mu)\|_{\X}\leq N(\|\xi\|_{\X})\left(1+\gamma(\|\mu\|_{\Ui})\right)
    \end{equation*}
    for all $t\geq 0$, $\xi\in\X$ and $\mu\in\Ui$.
  \item[(SL4)] \label{item:sl4} For every $r>0$ and $\varepsilon >0$, there exists $\delta >0$ such that for all $t\geq 0$, it is true that 
    \begin{equation*} 
      \|f(t,\xi,\mu)-f(t,\xi,0)\|_{\X}<\varepsilon
    \end{equation*}
    if $\|\xi\|_{\X}\leq r$ and $\|\mu\|_{\Ui}\leq\delta$.
  \end{enumerate}
\end{as}
{\color{black} Condition (SL3) can be replaced by an equivalent condition which is analogous to that appearing in (\ref{eq:r1p}) in Remark \ref{rem:bound}.}

When $f(t,\xi,\mu)$ is Lipschitz in $(\xi,\mu)$ on bounded sets and satisfies $f(t,0,0)\equiv 0$, it can be proved, similarly to the proof of Lemma~\ref{lem:f-lips-delay}, that $f$ satisfies (SL2)--(SL4) of Assumption~\ref{ass:f-sml}. This is made more precise as follows.
\begin{lema}
  \label{lem:f-lips-sl}
  Suppose that $f:\R_{\ge 0}\times \X \times \Ui \to \X$ is Lipschitz on bounded subsets of $\X \times \Ui$ uniformly over $\R_{\ge 0}$, i.e. for all $r\ge 0$ there exists $L=L(r)\ge 0$ such that for all $\xi,\zeta \in \X$ such that $\|\xi\|_{\X}\le r$ and $\|\zeta\|_{\X}\le r$ and all $\mu, \nu \in \Ui$ with $\|\mu\|_{\Ui}\le r$ and $\|\nu\|_{\Ui}\le r$ we have that
\begin{align*}
\|f(t,\xi,\mu)-f(t,\zeta,\nu)\|_{\X}\le L(\|\xi-\zeta\|_{\X} + \|\mu-\nu\|_{\Ui}) \quad \forall t\ge 0.
\end{align*}
Suppose in addition that $f(t,0,0)=0$ for all $t\ge 0$. Then $f$ satisfies (SL2)--(SL4) of Assumption~\ref{ass:f-sml}.
\end{lema}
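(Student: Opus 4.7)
The plan is to verify (SL2), (SL4), and (SL3) in turn, extracting all three from the uniform Lipschitz-on-bounded-sets hypothesis, with the vanishing condition $f(t,0,0)=0$ entering only in (SL3). The first two properties are essentially immediate.

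For (SL2), given $r>0$ I take $L=L(r)$ from the hypothesis; for $\|\xi\|_{\X},\|\omega\|_{\X}\le r$ and $\|\mu\|_{\Ui}\le r$, applying the Lipschitz bound with $\nu=\mu$ gives $\|f(t,\xi,\mu)-f(t,\omega,\mu)\|_{\X}\le L\|\xi-\omega\|_{\X}$ for every $t\ge 0$. For (SL4), given $r,\varepsilon>0$, set $L=L(r)$ (which I may assume positive, otherwise (SL4) is trivial) and pick $\delta\in(0,\min\{r,\varepsilon/L\})$: for $\|\xi\|_{\X}\le r$ and $\|\mu\|_{\Ui}\le\delta\le r$, the Lipschitz bound applied with $\zeta=\xi$ and $\nu=0$ yields $\|f(t,\xi,\mu)-f(t,\xi,0)\|_{\X}\le L\|\mu\|_{\Ui}\le L\delta<\varepsilon$.

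The main step is (SL3). Using $f(t,0,0)=0$, for any $\xi\in\X$ and $\mu\in\Ui$ I set $r:=\|\xi\|_{\X}+\|\mu\|_{\Ui}$, so that $\|\xi\|_{\X},\|\mu\|_{\Ui}\le r$; the Lipschitz hypothesis applied to $(\xi,\mu)$ and $(0,0)$ then gives
\[
\|f(t,\xi,\mu)\|_{\X}=\|f(t,\xi,\mu)-f(t,0,0)\|_{\X}\le L(r)\bigl(\|\xi\|_{\X}+\|\mu\|_{\Ui}\bigr)
\]
uniformly in $t\ge 0$. Before assembling the bound required by (SL3), I will replace $L$ by a continuous, non-decreasing envelope $\bar L$ with $\bar L(r)\ge\max\{L(r),1\}$, obtained by taking the supremum of $L$ over $[0,r]$ and then interpolating linearly between integer arguments; this is a standard and routine regularization.

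With $a=\|\xi\|_{\X}$ and $b=\|\mu\|_{\Ui}$, I split the estimate $(a+b)\bar L(a+b)$ according to whether $b\le a$ or $b>a$. In the first case $a+b\le 2a$ and $\bar L(a+b)\le\bar L(2a)$, so the bound is at most $2a\bar L(2a)$; in the second case, symmetrically, it is at most $2b\bar L(2b)$. I then define $N(a):=1+2a\bar L(2a)$, which is continuous, non-decreasing, and strictly positive on $\R_{\ge 0}$, and $\gamma(b):=b+2b\bar L(2b)$, which lies in $\Ki$ because $\bar L$ is continuous, non-decreasing, and bounded below by $1$. When $b\le a$, $\|f(t,\xi,\mu)\|_{\X}\le 2a\bar L(2a)\le N(a)\le N(a)(1+\gamma(b))$; when $b>a$, $\|f(t,\xi,\mu)\|_{\X}\le 2b\bar L(2b)\le\gamma(b)\le N(a)\gamma(b)\le N(a)(1+\gamma(b))$ because $N\ge 1$. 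Both cases deliver the required bound, and the only nontrivial bookkeeping is the regularization of $L$ into a continuous, non-decreasing majorant, which I expect to be the main (albeit routine) technical point of the proof.
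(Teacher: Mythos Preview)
Your proof is correct. The paper does not actually supply its own proof of this lemma: it merely states that the argument ``can be proved, similarly to the proof of Lemma~\ref{lem:f-lips-delay},'' which in turn is deferred to Lemma~1 of \cite{haiman_tac18}. Your direct verification---immediate for (SL2) and (SL4), and for (SL3) the bound $\|f(t,\xi,\mu)\|_{\X}\le \bar L(\|\xi\|_{\X}+\|\mu\|_{\Ui})(\|\xi\|_{\X}+\|\mu\|_{\Ui})$ followed by the $a\gtrless b$ case split to produce $N$ and $\gamma$---is exactly the kind of elementary argument the paper has in mind, and is consistent with the equivalent additive form of (SL3) noted in Remark~\ref{rem:bound}.
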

Under (SL1)--(SL3) of Assumption \ref{ass:f-sml} and the fact that the admissible inputs $u$ are piecewise continuous, a slight modification of \cite[Prop. 4.3.3]{cazenave_1998} to allow piecewise continuity proves that for every $t_0\ge 0$, $x_0\in \X$ and $u\in \U$ there exists a unique maximally defined weak solution $x:[t_0, t_{(t_0,x_0,u)})\to \X$ of (\ref{eq:sistsl}).

Defining the map $\phi:D_{\phi}\to \X$, with $D_{\phi}=\{(t,t_0,x_0,u)\in \R_{\ge 0}\times \R_{\ge 0} \times \X \times \U:t_0\le t<t_{(t_0,x_0,u)}\}$ and $\phi(t,t_0,x_0,u)=x(t)$ with $x:[t_0,t_{(t_0,x_0,u)})\to \X$ the unique maximally defined weak solution of (\ref{eq:sistsl}), we have that $\Sigma^{SL}=(\X,\U,\phi)$, which will be referred to as the system generated by~(\ref{eq:sistsl}), is a system according to Definition~\ref{def:system}. 

{\color{black} Under (SL1)-(SL2) of Assumption \ref{ass:f-sml}, system $\Sigma^{SL}$ has the boundedness implies continuation (BIC) property, as the following Lemma shows. The proof is given in Appendix \ref{app:BIC}.
\begin{lema} \label{lem:BIC}  Consider the semilinear system~(\ref{eq:sistsl}) and let (SL1)-(SL2) of Assumption~\ref{ass:f-sml} hold. Then $\Sigma^{SL}$ has the BIC property.
\end{lema}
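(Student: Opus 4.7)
My plan is to argue by contradiction: suppose $t^\ast := t_{(s,x,u)} < \infty$ yet $\|\phi(t,s,x,u)\|_{\X} \le r$ for all $t \in [s, t^\ast)$. The goal is to show that $x(t):=\phi(t,s,x,u)$ converges in $\X$ to some $x^\ast$ as $t\to t^{\ast-}$, and then to invoke the local-existence result (the piecewise-continuous-input version of \cite[Prop.~4.3.3]{cazenave_1998} that was used earlier in the section) at the new initial data $(t^\ast,x^\ast)$ to extend the weak solution strictly past $t^\ast$, contradicting maximality of $t_{(s,x,u)}$.

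Before taking the limit, I would first produce a uniform bound on $g(\tau):=f(\tau,x(\tau),u(\tau))$ over $[s,t^\ast)$. Piecewise continuity of $u$ on the compact interval $[s,t^\ast]$ guarantees $\|u(\tau)\|_{\Ui}\le M_u$, and the piece-by-piece continuity of $f$ provided by (SL1) bounds $\|f(\tau,0,u(\tau))\|_{\X}$ on each continuity subinterval and hence on all of $[s,t^\ast)$. The Lipschitz condition (SL2) applied on the ball of radius $\max\{r,M_u\}$ then yields a uniform bound $\|g(\tau)\|_{\X}\le M$.

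Armed with this bound and the mild-solution formula
\[
x(t)=T(t-s)x+\int_s^t T(t-\tau)g(\tau)\,d\tau,
\]
I would establish that $\{x(t)\}$ is Cauchy as $t\to t^{\ast-}$. The semigroup term $T(t-s)x$ causes no difficulty by strong continuity; the substantive work is to estimate the difference of the integrals for $s\le t_1<t_2<t^\ast$, which splits into a short-tail piece $\int_{t_1}^{t_2}T(t_2-\tau)g(\tau)\,d\tau$ (small by uniform boundedness of $\|T(\cdot)\|$ on the compact $[0,t^\ast-s]$ together with the bound on $g$) and a piece of the form $\int_s^{t_1}T(t_1-\tau)[T(t_2-t_1)-I]g(\tau)\,d\tau$ (small by strong continuity of $T$ at $0$ combined with dominated convergence).

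The main obstacle is precisely this Cauchy argument inside the Bochner integral: the semigroup $T$ is only strongly, not uniformly, continuous, so convergence to zero of the second piece cannot be extracted from a norm estimate on $T(h)-I$ and must instead be routed through pointwise convergence together with a dominating function furnished by the uniform bounds on $g$ and $\|T(\cdot)\|$. Once the limit $x^\ast\in\X$ is secured, extending the solution to the right of $t^\ast$ by local existence at $(t^\ast,x^\ast)$ under (SL1)-(SL2) is the routine finish and closes the contradiction, yielding $t_{(s,x,u)}=\infty$ as required.
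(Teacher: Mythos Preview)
Your argument is correct, but it takes a different route from the paper's proof. The paper does not carry out the Cauchy--limit--extension argument by hand; instead, after bounding $u$ on the compact interval $I=[t_0,t_{(t_0,x_0,u)}+1]$ and observing that $F(t,\xi):=f(t,\xi,u(t))$ is Lipschitz in $\xi$ on bounded sets uniformly in $t\in I$ with $F(\cdot,0)$ bounded (using (SL1)--(SL2)), it simply invokes a piecewise-continuous variant of the blow-up alternative \cite[Thm.~4.3.4]{cazenave_1998}, which states directly that a maximal mild solution with finite maximal time must be unbounded. This yields the contradiction in one line.

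Your approach is essentially a self-contained proof of that blow-up alternative in the present setting: you bound $g(\tau)=f(\tau,x(\tau),u(\tau))$, establish the Cauchy property of $t\mapsto x(t)$ via the variation-of-constants formula (correctly handling the strong-but-not-uniform continuity of $T$ through dominated convergence), and then re-apply local existence at $(t^\ast,x^\ast)$. This buys independence from the cited reference at the cost of length; the paper's version buys brevity at the cost of an external citation. Both are sound, and the preliminary bound on $f(\cdot,0,u(\cdot))$ via (SL1) plus the Lipschitz step via (SL2) are common to both.
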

}
The following Lemma asserts that $\Sigma^{SL}$ satisfies Assumption~\ref{as:transition-map} if Assumption~\ref{ass:f-sml} holds. The proof is provided in Appendix~\ref{app:p-sls}.
\begin{lema}
  \label{lem:lema-ass2-ssl}
  Consider the semilinear system~(\ref{eq:sistsl}), let Assumption~\ref{ass:f-sml} hold, let $\gamma\in \Ki$ be the function from (SL3), and let $\Sigma^{SL}$ be the system generated by~(\ref{eq:sistsl}). Then, $\Sigma^{SL}$ satisfies Assumption~\ref{as:transition-map} with $\|\cdot\|_{\U}=\|\cdot\|_{\gamma}$.
\end{lema}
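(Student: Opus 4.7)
The plan is to exploit the variation of constants (mild/weak solution) formula together with a Gronwall argument, splitting the time interval according to whether $\|u(s)\|_{\Ui}$ is small or large. Fix $r>0$, $\varepsilon>0$, $T>0$. Let $t_0\ge 0$, $x_0\in\X$ and $u\in\U$ with $\|u\|_\gamma\le \delta$ (with $\delta$ to be chosen). Suppose $\|x(t)\|_\X\le r$ and $\|z(t)\|_\X\le r$ on $[t_0,t^*]$ with $t^*-t_0\le T$. Write $e(t):=z(t)-x(t)$. Using the integral equation for the weak solutions,
\begin{equation*}
e(t)=\int_{t_0}^{t} T(t-s)\bigl[f(s,z(s),0)-f(s,x(s),0)\bigr]\,ds + \int_{t_0}^{t} T(t-s)\bigl[f(s,x(s),0)-f(s,x(s),u(s))\bigr]\,ds.
\end{equation*}
Since $T$ is a $C_0$-semigroup, $M:=\sup_{\sigma\in[0,T]}\|T(\sigma)\|_{\mathcal{L}(\X)}<\infty$. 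By (SL2), the first integrand is bounded in norm by $L(r)\,\|e(s)\|_\X$.

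The main work is to control the second integral. Given an auxiliary threshold $\delta_0>0$ (to be chosen), split $[t_0,t^*]=A_1\cup A_2$ where $A_1:=\{s:\|u(s)\|_{\Ui}\le\delta_0\}$ and $A_2$ is its complement. On $A_1$, apply (SL4) with the bound $r$ to find $\delta_0=\delta_0(r,\varepsilon_1)$ such that $\|f(s,x(s),u(s))-f(s,x(s),0)\|_\X<\varepsilon_1$, contributing at most $\varepsilon_1 T$ to the integral. On $A_2$, use the crude bound from (SL3),
\begin{equation*}
\|f(s,x(s),u(s))-f(s,x(s),0)\|_\X\le N(r)\bigl(2+\gamma(\|u(s)\|_{\Ui})\bigr),
\end{equation*}
and observe two consequences of $\|u\|_\gamma\le\delta$: first, by a Markov-type inequality, $\mathrm{meas}(A_2)\le \|u\|_\gamma/\gamma(\delta_0)\le \delta/\gamma(\delta_0)$; second, $\int_{A_2}\gamma(\|u(s)\|_{\Ui})\,ds\le \|u\|_\gamma\le\delta$. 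Hence the $A_2$-contribution is at most $N(r)\bigl(2\delta/\gamma(\delta_0)+\delta\bigr)$.

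Assembling these pieces yields
\begin{equation*}
\|e(t)\|_\X \le M\Bigl[\varepsilon_1 T + N(r)\bigl(\tfrac{2\delta}{\gamma(\delta_0)}+\delta\bigr)\Bigr] + M L(r)\int_{t_0}^{t}\|e(s)\|_\X\,ds,
\end{equation*}
and Gronwall's lemma gives $\|e(t)\|_\X \le M\bigl[\varepsilon_1 T + N(r)\bigl(2\delta/\gamma(\delta_0)+\delta\bigr)\bigr]e^{ML(r)T}$ on $[t_0,t^*]$. Choosing first $\varepsilon_1$ small so that $M\varepsilon_1 T e^{ML(r)T}<\varepsilon/2$, which fixes $\delta_0$ via (SL4), and then $\delta$ small so that $MN(r)(2/\gamma(\delta_0)+1)\delta\,e^{ML(r)T}<\varepsilon/2$, produces $\|e(t)\|_\X\le\varepsilon$ as required.

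The principal obstacle is the second integral: the pointwise continuity-at-zero estimate from (SL4) cannot be invoked when $\|u(s)\|_{\Ui}$ is large, yet on the large-input set only the integral bound $\|u\|_\gamma\le\delta$ is available rather than a sup-norm bound. The resolution is precisely the Markov-inequality step that turns $\|u\|_\gamma\le\delta$ into a small-measure estimate for $A_2$, combined with the fact that $\int_{A_2}\gamma(\|u(s)\|_{\Ui})\,ds$ is itself $\le\delta$; together these make the crude (SL3) bound on $A_2$ small. All other ingredients—boundedness of $T(\cdot)$ on compacta, the Lipschitz estimate (SL2), and Gronwall—are routine.
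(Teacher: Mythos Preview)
Your argument is correct and is essentially the same as the paper's, differing only in packaging. The paper first isolates a pointwise estimate (its Claim~\ref{clm:semilin}, analogous to Claim~\ref{clm:keta} in the time-delay case): for every $r,\eta>0$ there is $k=k(r,\eta)$ with $\|f(t,\xi,\mu)-f(t,\xi,0)\|_\X\le \eta + k\gamma(\|\mu\|_\Ui)$ for all $\|\xi\|_\X\le r$, and then integrates this bound and applies Gronwall. Your set-splitting/Markov step is precisely the integrated version of that claim: the constant $k$ in the paper equals $N(r)[2/\gamma(\delta_0)+1]$, which matches your $A_2$ contribution term-for-term after integration. The pointwise formulation is a bit more reusable (it parallels the retarded case verbatim), but the underlying idea---use (SL4) where $\|u(s)\|_{\Ui}$ is small and absorb the constant from (SL3) into $\gamma(\|u(s)\|_{\Ui})$ where it is large---is identical.
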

The following characterization of iISS can be proved almost identically as Theorem~\ref{thm:iISS-ubebs+0-guas}, but invoking Lemma~\ref{lem:lema-ass2-ssl} instead of Lemma~\ref{lem:teosigrh}.
\begin{teo}\label{thm:iISS-ubebs+0-guas-sl}
Let $\Sigma^{SL}$ be the system generated by equation (\ref{eq:sistsl}). Suppose that Assumption \ref{ass:f-sml} holds and let $\gamma\in \Ki$ be the function coming from (SL3) of such assumption. Then the following hold.
\begin{enumerate}
\item[a)] If system $\Sigma^{SL}$ is iISS with iISS-gain $\kappa$, then $\Sigma^{SL}$ is 0-GUAS and UBEBS with gain $\kappa$.

\item[b)] If system $\Sigma^{SL}$ is $0$-GUAS and UBEBS with UBEBS-gain $\alpha$, then $\Sigma^{SL}$ is iISS with gain $\kappa=\max\{\alpha,\gamma\}$.
\end{enumerate}
\end{teo}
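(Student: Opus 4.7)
The plan is to follow the template of the proof of Theorem~\ref{thm:iISS-ubebs+0-guas} \emph{mutatis mutandis}, invoking Lemma~\ref{lem:lema-ass2-ssl} where that proof invoked Lemma~\ref{lem:teosigrh}. Because the state of $\Sigma^{SL}$ already lives in the Banach space $\X$, I do not need an analogue of Proposition~\ref{prop:delay-general-equiv} to bridge two different notions of iISS; Definitions~\ref{def:stability-def} and~\ref{def:iISS} apply to $\Sigma^{SL}$ directly.

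Part (a) I would read off immediately from the definitions. If $\Sigma^{SL}$ is $\|\cdot\|_{\kappa}$-iISS, then~(\ref{eq:iss}) holds for some $\beta\in\KL$ and $\rho\in\Ki$ with $\|u\|_{\U}$ replaced by $\|u\|_{\kappa}$. Setting $u\equiv\zi$ yields $0$-GUAS, while bounding $\beta(\|x_0\|_{\X},t-t_0)\le\beta(\|x_0\|_{\X},0)=:\alpha(\|x_0\|_{\X})$ yields UBEBS with gain $\kappa$ (in fact with constant $c=0$).

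For part (b), set $\kappa:=\max\{\alpha,\gamma\}\in\Ki$. From $\kappa\ge\alpha$ pointwise I get $\|u\|_{\alpha}\le\|u\|_{\kappa}$ for every $u\in\U$, so $\Sigma^{SL}$ is also $\|\cdot\|_{\kappa}$-UBEBS. From $\kappa\ge\gamma$ pointwise I get $\|u\|_{\gamma}\le\|u\|_{\kappa}$, and this allows me to transfer Assumption~\ref{as:transition-map} from $\|\cdot\|_{\gamma}$ (which is what Lemma~\ref{lem:lema-ass2-ssl} supplies) to $\|\cdot\|_{\kappa}$: the $\delta(r,\varepsilon,T)$ produced by the $\|\cdot\|_{\gamma}$ version works verbatim for $\|\cdot\|_{\kappa}$, since any $u$ with $\|u\|_{\kappa}\le\delta$ automatically satisfies $\|u\|_{\gamma}\le\delta$. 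Forward completeness of $\Sigma^{SL}$ is already built into the UBEBS hypothesis (Definition~\ref{def:stability-def}\ref{item:UGB})), and $\|\cdot\|_{\kappa}$ satisfies condition (E) with equality by additivity of the Lebesgue integral over disjoint intervals. Hence every hypothesis of Theorem~\ref{thm:main} is met with $\|\cdot\|_{\U}=\|\cdot\|_{\kappa}$, and I conclude that $\Sigma^{SL}$ is $\|\cdot\|_{\kappa}$-iISS, i.e., iISS with gain $\kappa$.

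I do not foresee a genuine obstacle: all technical content has been absorbed into Lemma~\ref{lem:lema-ass2-ssl} (uniform continuity at the zero input for weak solutions of~(\ref{eq:sistsl})) and into Theorem~\ref{thm:main} (the superposition characterization of iISS). The only point deserving a moment of thought is the monotonicity observation by which Assumption~\ref{as:transition-map} transfers to a stronger admissible functional, and this is routine.
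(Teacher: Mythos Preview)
Your proposal is correct and follows essentially the same approach as the paper, which explicitly states that the proof is ``almost identical'' to that of Theorem~\ref{thm:iISS-ubebs+0-guas} but with Lemma~\ref{lem:lema-ass2-ssl} in place of Lemma~\ref{lem:teosigrh}. Your observation that Proposition~\ref{prop:delay-general-equiv} is unnecessary here (since $\Sigma^{SL}$ already has state space $\X$) is exactly the simplification the paper intends, and your justification for transferring Assumption~\ref{as:transition-map} from $\|\cdot\|_{\gamma}$ to $\|\cdot\|_{\kappa}$ via the pointwise inequality $\gamma\le\kappa$ matches the paper's reasoning.
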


\subsection{Semilinear systems: generalized bilinear form}
\label{sec:gener-bilin}

The much stronger result that 0-GUAS on its own is equivalent to iISS (Theorem~\ref{thm:bilineal}) can be obtained when Assumption~\ref{ass:f-sml} is replaced by the following stronger condition, which replaces (SL3)--(SL4) by a bound on $\|f(t,\xi,\mu)\|_{\X}$ of a specific, affine-in-$\|\xi\|_{\X}$ form.
\begin{as}
  \label{as:bilinear}
  Let $f$ in~(\ref{eq:sistsl}) satisfy (SL1) and (SL2) of Assumption~\ref{ass:f-sml}, jointly with the bound
\begin{align} \label{eq:bound-bi}
\|f(t,\xi,\mu)\|_{\X}\le (K\|\xi\|_{\X}+d)\gamma(\|\mu\|_{\Ui})\quad \forall \xi\in \X, \mu\in \Ui
\end{align}
for some constants $K,d\in \R_{\ge 0}$ and some $\gamma \in \Ki$. 
\end{as}
If a nonlinear function $f$ satisfies Assumption~\ref{as:bilinear}, then (SL3) of Assumption~\ref{ass:f-sml} holds with $N(r)=Kr+d$ and the same function $\gamma$, and (SL4) follows directly from (\ref{eq:bound-bi}), since $f(t,\xi,0)=0$ for all $(t,\xi) \in \R_{\ge 0} \times \X$.

Examples of functions satisfying Assumption~\ref{as:bilinear} are those of the form $f(t,\xi,\mu)=B(t)\mu+C(t,\xi,\mu)$, where $B:\R_{\ge 0}\to \mathcal{L}(\Ui,\X)$ is piecewise continuous, $C:\R_{\ge 0}\times \X\times \Ui\to \X$ is piecewise continuous in $t$ and $C(t,\cdot,\cdot)$ is bilinear, and there exist constants $K,d\ge 0$ such that $\|B(t)\|\le d$ and $\sup_{\|x\|_{\X}= 1,\|u\|_{\Ui}= 1}\|C(t,\xi,\mu)\|_{\X}\le K$ for all $t\ge 0$. In this case (\ref{eq:bound-bi}) is satisfied with these values of $K$ and $d$, and with $\gamma(r)=r$.

Recall that the semigroup $T(\cdot)$ generated by the operator $A$ is exponentially stable if $\|T(t)\|\le Me^{-\lambda t}$ for some $M\ge 1$ and $\lambda>0$, where $\|T(t)\|$ denotes the induced norm of the operator $T(t)$. Also, exponential stability of $T(\cdot)$ is equivalent to GUAS of the system $\dot{x}=Ax$ \cite[Prop. 3]{dasmir_mcss13}.  

\begin{teo}
  \label{thm:bilineal}
  Consider a semilinear system (\ref{eq:sistsl}) that satisfies Assumption \ref{as:bilinear}. Then, the following are equivalent. 
\begin{enumerate}[a)]
\item \label{item:bilin-iiss}System (\ref{eq:sistsl}) is iISS.
\item \label{item:bilin-0guas}System (\ref{eq:sistsl}) is $0$-GUAS.
\end{enumerate}
\end{teo}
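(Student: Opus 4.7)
The direction (\ref{item:bilin-iiss}) $\Rightarrow$ (\ref{item:bilin-0guas}) is immediate: setting $u=\zi$ in the iISS estimate of Definition~\ref{def:iISS} recovers the $\KL$-bound of Definition~\ref{def:stability-def}\ref{item:0-GUAS}). For the converse, my plan is to reduce to Theorem~\ref{thm:iISS-ubebs+0-guas-sl}, which characterises iISS as $0$-GUAS $\wedge$ UBEBS under Assumption~\ref{ass:f-sml}. Since the paragraph preceding the statement of Theorem~\ref{thm:bilineal} already notes that Assumption~\ref{as:bilinear} implies Assumption~\ref{ass:f-sml}, it suffices to show that $0$-GUAS alone forces UBEBS under the bilinear-type bound~(\ref{eq:bound-bi}).

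Because $\gamma\in\Ki$ gives $\gamma(0)=0$, the bound~(\ref{eq:bound-bi}) forces $f(t,\xi,0)\equiv 0$, so the zero-input system reduces to $\dot{x}=Ax$. By the equivalence recalled just before Theorem~\ref{thm:bilineal} (from \cite{dasmir_mcss13}), $0$-GUAS is then equivalent to exponential stability of the $C_0$-semigroup, i.e.\ $\|T(t)\|\le Me^{-\lambda t}$ for some $M\ge 1$ and $\lambda>0$. Substituting this and (\ref{eq:bound-bi}) into the mild-solution formula and using $e^{-\lambda(t-s)}\le 1$ inside the integral yields
\begin{equation*}
  \|x(t)\|_{\X}\le M\|x_0\|_{\X}+\int_{t_0}^{t}M\bigl(K\|x(s)\|_{\X}+d\bigr)\gamma(\|u(s)\|_{\Ui})\,ds.
\end{equation*}
I would then apply Gr\"onwall's inequality to $\psi(t)=K\|x(t)\|_{\X}+d$ (noting $KM\|x_0\|_{\X}+d\le M\psi(t_0)$ because $M\ge 1$) to obtain
\begin{equation*}
  \|x(t)\|_{\X}\le M\|x_0\|_{\X}\exp\bigl(MK\|u\|_{\gamma}\bigr)+\frac{d\bigl(M\exp(MK\|u\|_{\gamma})-1\bigr)}{K},
\end{equation*}
with the degenerate case $K=0$ handled by direct integration.

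The remaining step is to recast this multiplicative bound into the additive UBEBS form $c+\alpha(\|x_0\|_{\X})+\rho(\|u\|_{\gamma})$ required by Definition~\ref{def:stability-def}\ref{item:UGB}). Young's inequality $ab\le\tfrac12 a^{2}+\tfrac12 b^{2}$ applied to $M\|x_0\|_{\X}\exp(MK\|u\|_{\gamma})$ suggests $\alpha(r)=M^{2}r^{2}/2\in\Ki$, while regrouping the constant contributions of $\tfrac12\exp(2MK\|u\|_{\gamma})$ and $(d/K)M\exp(MK\|u\|_{\gamma})$ produces a class-$\Ki$ function $\rho$ together with a nonnegative constant $c$, giving UBEBS with gain $\gamma$. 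Forward completeness (implicit in UBEBS) follows from Lemma~\ref{lem:BIC} combined with this a priori bound, which is finite on every finite horizon since $\|u_{(t_0,t]}\|_{\gamma}<\infty$ by Definition~\ref{def:admissible-f}\ref{item: monotonicity}). Theorem~\ref{thm:iISS-ubebs+0-guas-sl}(b) then delivers iISS with gain $\max\{\gamma,\gamma\}=\gamma$. The main subtlety I anticipate is the bookkeeping around forward completeness: the Gr\"onwall estimate must be established on the maximal existence interval so it can be fed back into BIC to obtain global existence, and the Young-inequality reshaping must be performed so that the resulting $\rho$ genuinely vanishes at zero while every leftover constant is absorbed into a single $c\ge 0$.
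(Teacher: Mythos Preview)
Your proposal is correct and follows essentially the same route as the paper: exponential stability of the semigroup from $0$-GUAS, a Gr\"onwall estimate on the mild-solution formula combined with the bilinear bound~(\ref{eq:bound-bi}), Young's inequality to recast the product into the additive UBEBS form, BIC for forward completeness, and finally Theorem~\ref{thm:iISS-ubebs+0-guas-sl}. The only cosmetic difference is that the paper introduces $z(t)=e^{\lambda(t-t_0)}\|x(t)\|_{\X}$ before applying Gr\"onwall, whereas you discard the decay via $e^{-\lambda(t-s)}\le 1$ and apply Gr\"onwall to $K\|x(t)\|_{\X}+d$; both yield equivalent UBEBS bounds.
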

\begin{proof} Since \ref{item:bilin-iiss}) $\Rightarrow$ \ref{item:bilin-0guas}) is trivial, we prove \ref{item:bilin-0guas}) $\Rightarrow$ \ref{item:bilin-iiss}). Suppose that the system is $0$-GUAS. Then the semigroup $T(\cdot)$ is exponentially stable. Let $M\ge 1$ and $\lambda>0$ so that $\|T(t)\|\le Me^{-\lambda t}$ for all $t\ge 0$, where $\|T(t)\|$ denotes the induced norm of the operator $T(t)$. In the remainder of this proof, we omit the subscripts in the norms $\|\cdot\|_{\X}$ and $\|\cdot\|_{\Ui}$ in order to avoid cluttered notation and because these can be inferred from the context. 

Let $t_0\ge 0$, $x_0\in \X$, $u \in \U$ and $x(\cdot)$ be the corresponding trajectory. Let $[t_0,t_{(t_0,x_0,u)})$ be the maximal interval of definition of $x(\cdot)$. Suppose without loss of generality that $\|u\|<\infty$. Then, for all $t_0\le t < t_{(t_0,x_0,u)}$,
\begin{align*}
x(t)=T(t-t_0)x_0+\int_{t_0}^t T(t-s)f(s,x(s),u(s))ds.
\end{align*}
Take the norm at both sides of the equality and apply the triangle inequality and the properties of the norm of the integral to obtain
\begin{align*}
\|x(t)\|&\le\|T(t-t_0)\|\|x_0\|+\int_{t_0}^t \|T(t-s)\|\|f(s,x(s),u(s))\|ds \\
&\le M e^{-\lambda(t-t_0)}\|x_0\| + \int_{t_0}^t M e^{-\lambda(t-s)} (K\|x(s)\|+d)\gamma(\|u(s)\|) ds.
\end{align*}
Multiply both sides by $e^{\lambda (t-t_0)}$ and define $z(t)=e^{\lambda (t-t_0)}\|x(t)\|$, so that
\begin{align*}
z(t)&\le M \|x_0\|+ M d \int_{t_0}^t e^{\lambda(s-t_0)}\gamma(\|u(s)\|) ds+ \int_{t_0}^t MK \gamma(\|u(s)\|) z(s)  ds.
\end{align*}
Then, for $t_0\le t\le \tau <t_{(t_0,x_0,u)}$
\begin{align*}
z(t)&\le M \|x_0\|+ M d \int_{t_0}^{\tau} e^{\lambda (\tau-t_0)}\gamma(\|u(s)\|) ds+ MK \int_{t_0}^t \gamma(\|u(s)\|) z(s)  ds.
\end{align*}
By applying Gronwall's Lemma on the interval $[t_0,\tau]$ it follows that
\begin{align*}
z(\tau)\le M \left [ \|x_0\|+ d \int_{t_0}^{\tau}  e^{\lambda (\tau-t_0)} \gamma(\|u(s)\|) ds \right ]e^{MK \int_{t_0}^{\tau} \gamma(\|u(s)\|)ds}.
\end{align*}
Recalling the definition of $z$, multiplying both sides by $e^{-\lambda(\tau-t_0)}$,  and taking into account that $\int_{t_0}^{\tau} \gamma(\|u(s)\|)ds \le \|u\|_{\gamma}$ and $e^{-\lambda(\tau-t_0)} \le 1$ for all $\tau\ge t_0$, then also for all $\tau \in [t_0,t_{(t_0,x_0,u)})$ we have
\begin{align*}
\|x(\tau)\| &\le M [ \|x_0\|+ d \|u\|_{\gamma} ] e^{MK \|u\|_{\gamma}}\\
&\le \frac{\|x_0\|^2}{2}+ \frac{M^2e^{2MK \|u\|_{\gamma}}}{2}+ Md \|u\|_{\gamma} e^{MK \|u\|_{\gamma}}
\end{align*}
where we have used the fact that $ab\le \frac{a^2}{2}+\frac{b^2}{2}$ setting $a=\|x_0\|$ and $b= Me^{MK\|u\|_{\gamma}}$. Defining $\alpha(r)=\frac{r^2}{2}$, $\rho(r)=\frac{M^2(e^{2MK r}-1)}{2}+ Md r e^{MK r}$ and $c=\frac{M^2}{2}$, we have that $\alpha, \rho \in \Ki$ and 
\begin{align*}
\|x(\tau)\|\le \alpha(\|x_0\|)+\rho(\|u\|_{\gamma})+c \quad \forall \tau \in [t_0,t_{(t_0,x_0,u)}).
\end{align*}

Since $\Sigma^{SL}$ has the BIC property according to Lemma \ref{lem:BIC}, then $t_{(t_0,x_0,u)}=\infty$ and the corresponding system $\Sigma^{SL}$ is UBEBS as per Definitions \ref{def:stability-def} and \ref{def:iISS}. The iISS of the system then follows from Theorem \ref{thm:iISS-ubebs+0-guas-sl}.
\end{proof}

Theorem~\ref{thm:bilineal} generalizes \cite[Theorem~4.2]{mirito_mcrf16} to the time-varying case. The proof given here is based on the general characterization~(\ref{eq:iISS-superp}), while that in \cite{mirito_mcrf16} uses an {\em ad hoc} method.
A recent result dealing with the relationship between ISS and iISS for generalized bilinear time-invariant systems, allowing for unbounded (linear) input operators is given in \cite{hosjac_mcss22}. The results in the current paper are neither a special case nor more general than those of \cite{hosjac_mcss22}.

\section{Conclusions}
\label{sec:conclusions}
The equivalence between integral input-to-state stability (iISS) and the combination of global uniform asymptotic stability under zero input (0-GUAS) with uniformly bounded-energy input/bounded state (UBEBS) was established for systems defined in abstract form, provided a reasonable assumption of continuity of the trajectories with respect to the input, at the zero input, is satisfied and employing a more general definition of iISS. Sufficient conditions for this assumption to be satisfied were given for time-delay systems and for semilinear evolution equations over Banach spaces. The abstract definition of system employed allows for time-varying infinite-dimensional systems whose solutions are unique. It is expected that our main result could be helpful in (a) establishing the equivalence for other specific classes of infinite-dimensional systems, such as semilinear systems over Banach spaces involving unbounded input operators, for which very few results are currently available, and (b) giving mild conditions under which ISS implies iISS, as done for finite-dimensional systems in \cite{haiman_auto18}. {\color{black} Future work could also address the generalization of the asymptotic characterizations of iISS that involve some limit inferior of the norm of the trajectory, as per the BEFBS and BEWCS properties in \cite[Section~4.2]{anging_siamjco04}.}

\appendix
{\color{black} \section{Proof of Lemma \ref{lem:ass1}} \label{app:proof-lem-ass1}
In what follows we omit the subscripts in $\|\cdot\|_{\X}$ and $\|\cdot\|_{\U}$, which can be easily inferred from the context.

Suppose that the forward complete and UBRS system $\Sigma$ satisfies Assumption \ref{as:transition-map}. For $(t_0,x_0,u)\in \R_{\ge 0}\times \X \times \U$, let $\varphi(t,t_0,x_0,u)=\phi(t,t_0,x_0,u)-\phi(t,t_0,x_0,\zi)$ for all $t\ge t_0$.
For nonnegative $T,r$ and $s$ define
\begin{align}
\hat \Gamma(\ell,r,s)=\sup\{\|\varphi(t,t_0,x_0,u)\|:0\le t_0 \le t \le t_0+\ell,\:\|x_0\|\le r,\:\|u\|\le s\}.
\end{align}
The $UBRS$ of $\Sigma$ ensures that $\hat \Gamma(\ell,r,s)<\infty$. From the above definition, it follows that $\hat\Gamma$ is nondecreasing in each of its arguments and that for all $(t_0,x_0,u)\in \R_{\ge 0}\times \X \times \U$ such that $\|u\|<\infty$,
\begin{align} \label{eq:hatgamma}
\|\varphi(t,t_0,x_0,u)\|\le \hat \Gamma(t-t_0,\|x_0\|,\|u\|)\quad \forall t\ge t_0.
\end{align}
Next, we prove that $\lim_{s\to 0^+}\hat \Gamma(\ell,r,s)=0$ for all nonnegative $\ell$ and $r$.
Fix $\ell,r\ge 0$ and $\varepsilon>0$. Due to the UBRS property there exists $R=R(\ell,r)$ such that for all $t_0 \ge 0$, $x_0\in \X$ with $\|x_0\|\le r$ and $u\in \U$ with $\|u\|\le 1$ it follows that
\begin{align}
\max\{\|\phi(t,t_0,x_0,u)\|, \|\phi(t,t_0,x_0,\zi)\|\}\le R\quad \forall t\in [t_0,t_0+\ell]. 
\end{align}
Assumption \ref{as:transition-map} ensures the existence of $\delta>0$, which we can assume less than $1$, such that for all $t_0 \ge 0$, $x_0\in \X$ with $\|x_0\|\le r$ and $u\in \U$ with $\|u\|\le \delta$ we have 
\begin{align}
\|\varphi(t,t_0,x_0,u)\|\le \varepsilon \quad \forall t\in [t_0,t_0+\ell].
\end{align}
 In consequence, from the definition of $\hat \Gamma$ we have that $0\le \hat \Gamma(\ell,r,s)\le \hat \Gamma(\ell,r,\delta)\le \varepsilon$ for all $0\le s\le \delta$, and then that $\lim_{s\to 0^+}\hat \Gamma(\ell,r,s)=0$ follows. By standard arguments one can prove the existence of a function $\Gamma:\R^3_{\ge 0}\to \R_{\ge 0}$ which is continuous and strictly increasing in each of the first two arguments, $\Gamma(\ell,r,\cdot)\in \Ki$ for all $\ell$ and $r$ and $\Gamma \ge \hat \Gamma$. 
 From (\ref{eq:hatgamma}), the fact that $\Gamma\ge \hat \Gamma$ and causality it follows that
 \begin{align}
\|\varphi(t,t_0,x_0,u)\|\le \Gamma(t-t_0,\|x_0\|,\|u_{(t_0,t]}\|)\quad \forall t\ge t_0.
\end{align}
We then have proved that \ref{item:I}) implies \ref{item:II}).

That \ref{item:II}) implies \ref{item:I}) follows straightforwardly. Given $R,\varepsilon, T>0$, if there exists $t^* \in (t_0,t_0+T]$ such that both $\|\phi(t,t_0,x_0,u)\| \le R$ and $\|\phi(t,t_0,x_0,\zi)\| \le R$ are satisfied for all $t\in [t_0,t^*]$, then $\|x_0\|\le R$ and $\|\varphi(t,t_0,x_0,u)\| \le \Gamma(T,R,s)$ for all $t\in [t_0,t_0+T]$ provided that $\|u\|\le s$. Define $\bar\Gamma \in \Ki$ as $\bar\Gamma(s) = \Gamma(T,R,s)$ and set $\delta = \bar\Gamma^{-1}(\varepsilon)$. Then, $\|\varphi(t,t_0,x_0,u)\| \le \bar\Gamma(\delta) = \varepsilon$ for all $t\in [t_0,t_0+T]$ if $\|u\|\le\delta$. This shows that Assumption~\ref{as:transition-map} holds. 
}

  \section{Proof of Theorem \ref{thm:ISS-e-d}}
  \label{app:proof-thm:ISS-e-d}

Suppose that the system $\Sigma$ is $\|\cdot\|_{\U}$-ISS and let $\beta \in \KL$ and $\rho \in \Ki$ the functions characterizing this stability property. 

Let $T>0$, $r>0$ and $s>0$. Let $t_0\ge 0$, $x\in B_r$ and $u\in \U$ be such that $\|u\|_{\U}\leq s$. Then for all $t\ge t_0$
\begin{align*}
\|\phi(t,t_{0},x,u)\|_{\mathcal{X}}&\leq  \beta(\|x\|_{\mathcal{X}},t-t_{0})+\rho(\|u\|_{\U})\\
&\leq  \beta(r,t-t_{0})+\rho(\|u\|_{\U})\\
&\leq  \beta(r,0)+\rho(s).
\end{align*}
Thus $\Sigma$ satisfies C\ref{item:ubrs}) with $C=\beta(r,0)+\rho\left(s\right)$.

To prove C\ref{item:smallx0u}), take $\delta=\alpha^{-1}(\varepsilon)$ with $\alpha(\cdot)=\beta(\cdot,0)+\rho(\cdot)\in\Ki$. Indeed, if $t_0\ge 0$, $\|u\|_{\U}\leq\delta$ and $\|x\|_{\mathcal{X}}\leq\delta$ we have that
\begin{align*}
\|\phi(t,t_{0},x,u)\|_{\mathcal{X}}&\leq  \beta(\|x\|_{\mathcal{X}},t-t_{0})+\rho(\|u\|_{\U})\\
&\leq  \beta(\delta,t-t_{0})+\rho(\|u\|_{\U})\\
&\leq  \beta(\delta,0)+\rho\left(\delta\right)\\
&= \alpha(\delta)=\varepsilon.
\end{align*}

As for C\ref{item:attract}), let $0<\varepsilon\le r$. Since $\beta(r,t)\rightarrow 0$ as $t\rightarrow\infty$ then there exists $T>0$ such that for all $t\geq T$ we have that $\beta(r,t)\leq\varepsilon$. Let $t_0\ge 0$, $x\in B_r$ and $u \in \U$. Then, for all $t\ge t_0+T$, 
\begin{align*}
\|\phi(t,t_{0},x,u)\|_{\mathcal{X}}&\leq  \beta(\|x\|_{\mathcal{X}},t-t_{0})+\rho(\|u\|_{\U})\\
&\leq  \beta(r,T)+\rho(\|u\|_{\U}) \le \varepsilon + \rho(\|u\|_{\U})
\end{align*}
and therefore C\ref{item:attract}) holds with $\nu=\rho$.

Conversely, suppose that $\Sigma$ satisfies C\ref{item:ubrs})--C\ref{item:attract}).

Let $r\ge 1$. 
%
By C\ref{item:attract}) with $\varepsilon=1$ there exists $T_{1}>0$ such that if $t_0\ge 0$, $\|x\|_{\mathcal{X}}\leq r$ and $u\in \U$, then $\|\phi(t,t_{0},x,u)\|_{\mathcal{X}}\leq 1+\nu\left(\|u\|_{\U}\right)$ for all $t\geq t_{0}+T_{1}$. If, in addition, $\|u\|_{\U}\leq r$, then $\|\phi(t,t_{0},x,u)\|_{\mathcal{X}}\leq 1+\nu\left( r\right)$ for all $t\geq t_{0}+T_{1}$.

From C\ref{item:ubrs}), there exists a $C>0$ such that $\|\phi(t,t_{0},x,u)\|_{\mathcal{X}}\leq C$ if $t\in[t_{0},t_{0}+T_{1}]$, $\|x\|_{\mathcal{X}}\leq r$ and $\|u\|_{\U}\leq r$. Therefore, $\|\phi(t,t_{0},x,u)\|_{\mathcal{X}}\leq \max\{C,1+\nu\left( r\right)\}\leq 1+C+\nu\left( r\right)$ for all $t\ge t_0\ge 0$, $x\in \X$ such that $\|x\|_{\X} \le r$ and all $u\in \U$ such that $\|u\|_{\U}\le r$. 

Define for $r\ge 0$,
$$\varphi(r):=\sup\left\lbrace\|\phi(t,t_{0},x,u)\|_{\mathcal{X}}:\;0\le t_{0}\le t,\;\|x\|_{\mathcal{X}}\leq r,\;\|u\|_{\U}\leq r\right\rbrace.$$
Note that $\varphi$ is clearly nondecreasing and, by the previous analysis, $\varphi(r)$ is finite for every $r\geq 1$. Then, $\varphi(r)$ is finite for every $r\ge 0$.
By C\ref{item:smallx0u}) and C\ref{item:attract}), it straightforwardly follows that $\varphi(r)\rightarrow 0$ as $r\rightarrow 0^+$. Then, there exists $\hat\varphi \in \Ki$ such that $\varphi \le \hat\varphi$. Therefore  $\|\phi(t,t_{0},x,u)\|_{\mathcal{X}}\leq\varphi(\max\{\|x\|_{\mathcal{X}},\|u\|_{\U}\})\leq \hat \varphi(\|x\|_{\mathcal{X}})+ \hat \varphi(\|u\|_{\U})$ for all $x\in\mathcal{X}$, $u\in\mathcal{U}$ and $t\ge t_{0}\ge 0$.
It follows that for all $t\geq t_0 \ge 0$, all $x\in \X$ such that $\|x\|_{\X}\le r$ and all $u\in\mathcal{U}$ we have $\|\phi(t,t_{0},x,u)\|_{\mathcal{X}}\leq  \hat{\varphi}(r) + \hat{\varphi}(\|u\|_{\U})$. Then, for all $t\geq t_0 \ge 0$, all $x\in \X$ such that $\|x\|_{\X}\le r$ and all $u\in\mathcal{U}$,
\begin{align} \label{eq:bound}
\|\phi(t,t_{0},x,u)\|_{\mathcal{X}}\leq  \hat{\varphi}(r) + \hat{\varphi}(\|u\|_{\U}).
\end{align}
From (\ref{eq:bound}) and C3), by proceeding as in the proof of Lemma 8 in \cite{Mironchenko2018} or as in that of Lemma 2.7 in \cite{sonwan_scl95},
it follows that there exists a function $\beta\in \KL$ for which the estimate
\begin{equation*}
\|\phi(t,t_{0},x,u)\|_{\mathcal{X}}\leq\beta(\|x\|_{\mathcal{X}},t-t_{0})+\rho(\|u\|_{\U})
\end{equation*}
holds with $\rho := \max\{\nu,\hat\varphi\}$. Hence the system $\Sigma$ is ISS.

\section{Proof of Lemma \ref{lem:o-guas+UGR:UGS}}
\label{app:proof-lem:o-guas+UGR:UGS}

Let $\alpha$, $\rho$ and $c$ as in the definition of UGB. For $r\geq 0$, define
\begin{equation*}
\tilde{\alpha}(r)=\sup\{\|\phi(t,t_{0},x,u)\|_{\mathcal{X}}: 0\le t_0\le t,\;\|x\|_{\mathcal{X}}\leq r\;\makebox{and}\;\|u\|_{\U}\leq r\}
\end{equation*}
The function $\tilde{\alpha}$ is non-decreasing and finite by the UGB property. 

Next, we prove that $\displaystyle\lim_{r\rightarrow 0^{+}}\tilde{\alpha}(r)=0$. Define $r^{*}=\rho(1)+\alpha(1)+c$ and let $\beta\in\mathcal{KL}$ be the function characterizing 0-GUAS. For a given $\varepsilon >0$, take  $\delta_{1}\in(0,1)$ so that $\delta_{1}\leq\beta(\delta_{1},0)<\frac{\varepsilon}{2}$ and $T>0$ such that $\beta(\delta_{1},T)<\frac{\delta_{1}}{2}$. Define $\eta=\frac{\delta_{1}}{2}$ and let $\gamma=\gamma(r^{*},\eta,T)$ be the constant coming from Lemma \ref{lem:main}. Induction will be used to prove that for every $x_0\in\X$ such that $\|x_0\|_{\mathcal{X}}\leq \delta_{1}$, every $0\le t_{0}\le t$, and every $u\in\mathcal{U}$ with $\|u\|_{\U}\leq\gamma$, then $\|\phi(t,t_{0},x_0,u)\|_{X}<\varepsilon$ for all $t\geq t_{0}$. 

For $i\in\N$, define $t_i := t_0 + iT$ and $x_i := \phi(t_i,t_0,x,u)$.
We have $\|\phi(t,t_{0},x_0,u)\|_{\X}\le r^*$ for all $t\ge t_0$. Apply Lemma~\ref{lem:main} to obtain 
\begin{align*}
\|\phi(t,t_{0},x_0,u)\|_{\mathcal{X}}&\leq \beta(\|x_0\|_{\X},t-t_{0})+\eta
\leq  \beta(\delta_{1},0)+\eta\\
&\leq  \frac{\varepsilon}{2}+\eta=  \frac{\varepsilon}{2}+\frac{\delta_{1}}{2} \\
&<  \frac{\varepsilon}{2}+\frac{\varepsilon}{2}=\varepsilon
\end{align*}
for all $t \in [t_0,t_0+T]=[t_0,t_1]$. In addition, $\|x_1\|_{\X}=\|\phi(t_1,t_{0},x,u)\|_{\X}\leq \beta(\delta_{1},T)+\eta<\frac{\delta_{1}}{2}+\frac{\delta_{1}}{2}=\delta_{1}$. 

Next, suppose that $\|x_i\| \le \delta_1$. Using the fact that $\phi(t,t_i,x_i,u)=\phi(t,t_0,x_0,u)$ for all $t\ge t_i$, repeating the latter reasoning we obtain $\|\phi(t,t_0,x_0,u)\|_{\X}\le \varepsilon$ for all $t\in [t_i,t_{i+1}]$ and $\|x_{i+1}\| = \|\phi(t_{i+1},t_0,x,u)\|_{\X}\le \delta_1$. In consecuence, induction establishes that $\|\phi(t,t_{0},x_0,u)\|_{\mathcal{X}}<\varepsilon$ for all $t\geq t_{0} \ge 0$, $x_0\in\X$, and $u\in \U$, provided that $\|x_0\|_{\X} \leq \delta$ and $\|u\|_{\U}\leq\delta$, with $\delta=\min\{\delta_1,\gamma\}$.
By definition of $\tilde\alpha$, it follows that for every $\varepsilon > 0$ there exists $\delta > 0$ such that $\tilde{\alpha}(r)\leq\tilde{\alpha}(\delta)<\varepsilon$ for every $0<r<\delta$. This proves that $\displaystyle\lim_{r\rightarrow 0^{+}}\tilde{\alpha}(r)=0$. 

Since $\tilde{\alpha}$ is non-decreasing and $\displaystyle\lim_{r\rightarrow 0^{+}}\tilde{\alpha}(r)=0$, there exists $\hat{\alpha}\in\mathcal{K}_{\infty}$ such that $\hat{\alpha}(r)\geq\tilde{\alpha}(r)$ for all $r\geq 0$. 
Let $0\le t_0\le t$, $x\in\mathcal{X}$, and $u\in\U$. From the definition of $\tilde{\alpha}$ and the fact that $\hat{\alpha}(r)\geq\tilde{\alpha}(r)$, it follows that
\begin{eqnarray*}
\|\phi(t,t_{0},x,u)\|_{\mathcal{X}} &\leq & \widehat{\alpha}(\|x\|_{\mathcal{X}})+\widehat{\alpha}(\|u\|_{\U})
\end{eqnarray*}
Consequently, the system is $\|\cdot\|_{\U}$-UGS because~(\ref{eq:ugb}) holds with $c=0$ and $\alpha=\rho=\hat{\alpha}$.

\section{Proof of Lemma~\ref{lem:main}}
\label{app:proof-lem:main}

Suppose that system $\Sigma$ is $0$-GUAS and let $\beta\in \KL$ so that (\ref{eq:0-guas}) holds. Let $r>0$, $\eta>0$, $T>0$. Set $r^*=\beta(r,0)$ and note that $r\le r^*$. Let $\delta=\delta(r^*,\eta,T)$ be the positive constant given by Assumption~\ref{as:transition-map} with $r^*$ instead of $r$ and $\eta$ instead of $\varepsilon$. 

Suppose that $\|\phi(t,t_{0},x,u)\|_{\mathcal{X}}\leq r$ for all $t\in[t_{0},t_{0}+T]$ and that $\|u\|_{\U}\leq \delta$. Then, $\|\phi(t,t_{0},x,\zi)\|_{\X}\le \beta(r,0)\le r^*$ for all $t\in [t_{0},t_{0}+T]$. The definition of $\delta$ and Assumption~\ref{as:transition-map} imply that
\begin{equation*}
\|\phi(t,t_{0},x,u)-\phi(t,t_{0},x,\zi)\|_{\mathcal{X}}\leq \eta \quad\forall t\in[t_{0},t_{0}+T].
\end{equation*}
Thus
\begin{eqnarray*}
\|\phi(t,t_{0},x,u)\|_{\mathcal{X}} &\leq & \|\phi(t,t_{0},x,\zi)\|_{\mathcal{X}}+\|\phi(t,t_{0},x,u)-\phi(t,t_{0},x,\zi)\|_{\mathcal{X}}\\
&\leq & \beta(\|x\|_{\mathcal{X}},t-t_{0})+\eta.
\end{eqnarray*}
and the proof concludes taking $\gamma=\delta$.

\section{Proof of Lemma \ref{lem:teosigrh}} \label{app:p-ds}
The proof of Lemma \ref{lem:teosigrh} employs the following version of Gronwall Lemma.
\begin{lema} \label{lem:gronwall}
Let $\psi:[t_{0}-\tau,t]\rightarrow \R$ be continuous and nonnegative and let $K,L\ge 0$ be such that 
\begin{align*}
\psi(\ell)\leq K+L\int_{t_{0}}^{\ell}\|\psi_{s}\|ds \quad\forall \ell\in[t_{0},t],
\end{align*}
where $\psi_{s}\in \mathcal{C}([-\tau,0],\R)$ is the function defined by $\psi_s(r)=\psi(r+s)$ for all $r\in [-\tau,0]$ and $\|\cdot\|$ is the supremum norm. Then,
\begin{align*}
\|\psi_\ell\|\leq \left(K+\|\psi_{t_{0}}\|\right) e^{L(\ell-t_{0})} \quad\forall \ell\in[t_{0},t].
\end{align*}
\end{lema}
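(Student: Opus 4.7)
My plan is to reduce the functional Gronwall inequality to the classical scalar Gronwall inequality by introducing a monotone envelope of $\psi$. The obstacle to a direct application of scalar Gronwall is that the right-hand side of the hypothesis involves $\|\psi_s\|$, which reaches into the initial history interval $[t_0-\tau, t_0]$ and is not bounded by $\psi(s)$ alone; the envelope construction is designed precisely to absorb this effect.

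First I would define the nondecreasing function
\begin{equation*}
\Phi(\ell) := \sup_{s \in [t_0 - \tau, \ell]} \psi(s), \qquad \ell \in [t_0, t].
\end{equation*}
Since for $\ell \ge t_0$ we have $\|\psi_\ell\| = \sup_{s \in [\ell-\tau, \ell]} \psi(s) \le \Phi(\ell)$, it suffices to bound $\Phi$. I would then split the supremum in the definition of $\Phi(\ell)$ into the pieces $s \in [t_0-\tau, t_0]$ and $s \in [t_0, \ell]$: on the first subinterval $\psi(s) \le \|\psi_{t_0}\|$ by definition of $\psi_{t_0}$, and on the second the hypothesis gives $\psi(s) \le K + L \int_{t_0}^{s} \|\psi_\sigma\|\,d\sigma \le K + L \int_{t_0}^{\ell} \|\psi_\sigma\|\,d\sigma$. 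Combining the two and using $\|\psi_\sigma\| \le \Phi(\sigma)$ yields
\begin{equation*}
\Phi(\ell) \le K + \|\psi_{t_0}\| + L \int_{t_0}^{\ell} \Phi(\sigma)\,d\sigma \qquad \forall \ell \in [t_0,t].
\end{equation*}

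At this point $\Phi$ satisfies an ordinary integral inequality (not a functional one), so I would apply the classical scalar Gronwall Lemma to obtain
\begin{equation*}
\Phi(\ell) \le (K + \|\psi_{t_0}\|)\, e^{L(\ell - t_0)} \qquad \forall \ell \in [t_0,t],
\end{equation*}
and then use $\|\psi_\ell\| \le \Phi(\ell)$ to conclude. The continuity of $\psi$ guarantees that $\Phi$ is well-defined and finite on the compact interval, and also ensures the measurability/continuity needed to invoke scalar Gronwall. The main conceptual step, and the only nontrivial one, is the introduction of the envelope $\Phi$ and the observation that the history term $\|\psi_{t_0}\|$ must be carried as an additive constant into the scalar Gronwall application; the rest of the argument is routine.
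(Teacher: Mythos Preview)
Your proof is correct and follows essentially the same approach as the paper: both reduce to the scalar Gronwall inequality by absorbing the history segment as the additive constant $\|\psi_{t_0}\|$. The only cosmetic difference is the choice of auxiliary function---the paper works directly with $\varphi(\ell)=\|\psi_\ell\|$ and uses the monotonicity of $\ell\mapsto K+L\int_{t_0}^{\ell}\|\psi_s\|\,ds$ to derive $\varphi(\ell)\le K+\|\psi_{t_0}\|+L\int_{t_0}^{\ell}\varphi(s)\,ds$, whereas you use the running maximum $\Phi(\ell)=\sup_{[t_0-\tau,\ell]}\psi$; both routes land on the same scalar inequality with the same constants.
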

\begin{proof}
  Define for $\ell\in [t_0,t]$, $\varphi(\ell)=\|\psi_{\ell}\|$ and $\Phi(\ell)=  K+L\int_{t_{0}}^{\ell}\|\psi_{s}\|ds$. Note that $\varphi$ is nonnegative and continuous and that $\Phi$ is nondecreasing. For every $\ell\in [t_0,t]$ and any $s\in [-\tau,0]$ we have that $\psi_{\ell}(s)=\psi(s+\ell)\le \Phi(s+\ell)\le \Phi(\ell)$ when $s+\ell\ge t_0$ and that $\psi_{\ell}(s)=\psi(s+\ell)\le \|\psi_{t_0}\|$ when $s+\ell<t_0$. In consequence, $\varphi(\ell)\le \|\psi_{t_0}\|+\Phi(\ell)$ for all $\ell\in [t_0,t]$ and hence
  \begin{align*}
    \varphi(\ell)\leq  \|\psi_{t_{0}}\|+ K+ L\int_{t_{0}}^{\ell}\varphi(s)ds.
  \end{align*}
  Applying the standard Gronwall inequality yields
  \begin{align*}
    \varphi(\ell)=\|\psi_\ell\|\leq \left( K+\|\psi_{t_{0}}\| \right) e^{L(\ell-t_{0})} \quad \forall \ell\in [t_0,t],
  \end{align*}
  which establishes the result.
\end{proof}

The following lemma employs this version of Gronwall's inequality to give a bound on the difference between specific solutions.
\begin{lema}
  \label{lem6}
  Suppose that $f$ in (\ref{eq:sistr}) satisfies Assumption~\ref{as:f-delay} and let $\gamma$ be given by (R1). Then, for every $r>0$ and $\eta>0$ there exist $L=L(r)$ and $k=k(r,\eta)$ such that if $x(\cdot)$ and $z(\cdot)$ are the maximally defined solutions of (\ref{eq:sistr}) corresponding to initial time $t_0\ge 0$, initial state $\psi_0 \in \mathcal{C}$ and, respectively, inputs $u\in \U$ and $\zi\in\U$, and if for some time $t^*>t_0$ it happens that $\|x_t\|\le r$ and $\|z_t\|\le r$ for all $t\in [t_0,t^*]$, then it also happens that
\begin{align} \label{eq:lema6}
\|x_t-z_t\|\leq\left[\eta(t-t_{0})+k\int^{t}_{t_{0}}\gamma(|u(s)|)\:ds \right]e^{L(t-t_{0})} \quad t\in [t_0,t^*]. 
\end{align}
\end{lema}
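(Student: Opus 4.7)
The plan is to estimate the difference $e(t) := x(t) - z(t)$ by splitting the dynamics into an input-dependent term (controlled via (R1) and (R2)) and a Lipschitz term (controlled via (R3)), and then to close the argument with the delay Gronwall inequality of Lemma~\ref{lem:gronwall}.

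\textbf{Step 1: decomposition of the error dynamics.} Since $x$ and $z$ share the initial condition, $e(r) = 0$ for all $r \in [t_0 - \tau, t_0]$. For $t \in [t_0, t^*]$ we write
\begin{align*}
\dot e(t) = \bigl[f(t, x_t, u(t)) - f(t, x_t, 0)\bigr] + \bigl[f(t, x_t, 0) - f(t, z_t, 0)\bigr].
\end{align*}
By (R3), the second bracket is bounded in norm by $L(r)\, \|x_t - z_t\|$ with $L=L(r)$.

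\textbf{Step 2: the input term.} This is the main point. Apply (R2) with the given $r$ and $\varepsilon = \eta$ to obtain $\delta = \delta(r,\eta) > 0$ such that $|f(t,\psi,\mu) - f(t,\psi,0)| < \eta$ whenever $\|\psi\| \le r$ and $|\mu| \le \delta$. For the complementary regime $|u(t)| > \delta$, (R1) gives $|f(t, x_t, u(t))| + |f(t, x_t, 0)| \le N(r)(2 + \gamma(|u(t)|))$, and since $\gamma$ is strictly increasing one has $2 \le 2\gamma(|u(t)|)/\gamma(\delta)$. Setting
\begin{align*}
k = k(r,\eta) := N(r)\left(1 + \tfrac{2}{\gamma(\delta(r,\eta))}\right),
\end{align*}
the two regimes combine into the uniform bound $|f(t, x_t, u(t)) - f(t, x_t, 0)| \le \eta + k\, \gamma(|u(t)|)$ whenever $\|x_t\| \le r$.

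\textbf{Step 3: integral inequality and Gronwall closure.} Integrating $\dot e$ from $t_0$ to any $t \in [t_0, t^*]$ and using the bounds from Steps 1 and 2 yields, with $\psi(r) := |e(r)|$ (so $\|x_t - z_t\| = \|\psi_t\|$ and $\|\psi_{t_0}\| = 0$),
\begin{align*}
\psi(\ell) \le \eta (\ell - t_0) + k \int_{t_0}^{\ell} \gamma(|u(s)|) \, ds + L \int_{t_0}^{\ell} \|\psi_s\|\, ds, \quad \ell \in [t_0, t^*].
\end{align*}
Fix $t \in [t_0, t^*]$ and apply this inequality on $\ell \in [t_0, t]$, bounding the first two terms by the (nondecreasing in $\ell$) constant $K := \eta (t - t_0) + k \int_{t_0}^{t} \gamma(|u(s)|)\, ds$. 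Invoking Lemma~\ref{lem:gronwall} with this $K$ and $\|\psi_{t_0}\| = 0$ gives $\|\psi_\ell\| \le K e^{L(\ell - t_0)}$ for all $\ell \in [t_0, t]$; taking $\ell = t$ yields exactly \eqref{eq:lema6}.

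The main technical obstacle is Step~2: the bound $\eta + k\gamma(|u(t)|)$ must be uniform in $t$, so one has to trade (R2) (which supplies smallness for small inputs but says nothing about large ones) against (R1) (which provides growth only through $\gamma$). The threshold $\delta(r,\eta)$ from (R2) is precisely what allows us to absorb the constant $N(r)(2+\gamma(|u(t)|))$ into a multiple of $\gamma(|u(t)|)$ whenever $|u(t)| > \delta$.
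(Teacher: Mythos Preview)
Your proof is correct and follows essentially the same approach as the paper's: the same decomposition of $f(t,x_t,u(t))-f(t,z_t,0)$, the same two-regime argument combining (R2) for $|\mu|\le\delta$ with (R1) for $|\mu|>\delta$ to obtain the bound $\eta+k\gamma(|\mu|)$ with the identical constant $k=N(r)\bigl(1+2/\gamma(\delta)\bigr)$, and the same closure via Lemma~\ref{lem:gronwall} after freezing the nondecreasing terms at $t$. The only (harmless) slip is the notational clash in Step~3, where you reuse $r$ as the time variable in $\psi(r):=|e(r)|$ while $r$ already denotes the radius bound.
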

\begin{proof}
  For every $s\ge 0$ define $\B_{s}^{\C}=\{\psi\in\mathcal{C}:\|\psi\|\leq s\}$ and $\B_{s}^{m}=\{\xi\in\R^m:|\xi|\leq s\}$.

  The following claim is analogous to that in the proof of \cite[Lemma~3]{haiman_tac18}.
  \begin{clm}
    \label{clm:keta}
    For every $r>0$ and $\eta >0$, there exists $k=k(r,\eta)>0$ such that for all $t\geq 0$, $\psi\in\B_{r}^{\C}$ and $\mu\in\R^{m}$
    \begin{align*}
      |f(t,\psi,\mu)-f(t,\psi,0)|\leq\eta + k \gamma(|\mu|).
    \end{align*}
  \end{clm} 
  \begin{proof}[Proof of the claim]
    Let $r>0$ and $\eta >0$ and take $\delta\in(0,1)$ from (R2) in Assumption~\ref{as:f-delay}, such that for all $t\geq 0$ and $(\psi,\mu)\in\B_{r}^{\C}\times\B_{\delta}^{m}$ then
    \begin{align*}
      |f(t,\psi,\mu)-f(t,\psi,0)|<\eta.
    \end{align*}
    If $\psi\in\B_{r}^{\C}$ and $|\mu|\geq\delta$, from (R1) in Assumption~\ref{as:f-delay}, it follows that
    \begin{align*}
      |f(t,\psi,\mu)-f(t,\psi,0)|&\leq  |f(t,\psi,\mu)|+|f(t,\psi,0)|\\
      &\leq  N(\|\psi\|)+N(\|\psi\|)\gamma(|\mu|)+N(\|\psi\|)\\
      &= 2N(\|\psi\|)+N(\|\psi\|)\gamma(|\mu|)\\
      &\leq  N(r) [2+\gamma(|\mu|)]\\
      &= N(r) \left[\frac{2}{\gamma(|\mu|)}+1\right]\gamma(|\mu|) \\
      &\leq N(r) \left[\frac{2}{\gamma(\delta)}+1\right]\gamma(|\mu|).
    \end{align*}
    By taking $k=N(r)\left[\frac{2}{\gamma(\delta)}+1\right]$ we then have that for all $t\ge 0$, $\psi \in B_r^\C$ and $\mu \in \R^m$,
    \begin{align*}
      |f(t,\psi,\mu)-f(t,\psi,0)|\leq \eta+ k \gamma(|\mu|)
    \end{align*}
    and the claim follows.
  \end{proof}

  Let $r>0$, $\eta > 0$, and let $L=L(r)$ be given by (R3) in Assumption~\ref{as:f-delay}.
  Let $k=k(r,\eta)$ be given by Claim~\ref{clm:keta} and let $t_0$, $t^*$, $\psi$, $u$, $x(\cdot)$ and $z(\cdot)$ be as in the statement of Lemma \ref{lem6}. 
  Let $t_0\le t\le t^*$. Since for $\ell\in [t_0,t]$,
  \begin{align*}
    x(\ell)=x(t_0)+\int_{t_0}^{\ell} f(s,x_s,u(s))\:ds
  \end{align*}
  and 
  \begin{align*}
    z(\ell)=x(t_0)+\int_{t_0}^{\ell} f(s,z_s,0)\: ds
  \end{align*}
  it follows that for all $\ell \in [t_0,t]$,
  \begin{align*}
    |x(\ell)-z(\ell)|&\leq \int_{t_{0}}^{\ell}|f(s,x_{s},u(s))-f(s,z_{s},0)|ds\\
    &\leq \int_{t_{0}}^{\ell}|f(s,x_{s},u(s))-f(s,x_{s},0)|ds
    +\int_{t_{0}}^{\ell}|f(s,x_{s},0)-f(s,z_{s},0)|ds\\
    &\leq  \int_{t_{0}}^{\ell}\eta + k \gamma(|u(s)|)ds+L\int_{t_{0}}^{\ell}\| x_{s}-z_{s}\| ds\\
    &\leq  \eta(\ell -t_{0})+k\int_{t_{0}}^{\ell}\gamma(|u(s)|)ds+L\int_{t_{0}}^{\ell}\| x_{s}-z_{s}\| ds
  \end{align*} 
  Let $\varphi(\ell)=|x(\ell)-z(\ell)|$ for $\ell \in [t_0-\tau,t]$. Then, for all $\ell \in [t_0,t]$,
  \begin{equation*}
    \varphi(\ell)\leq \eta(t -t_{0})+k\int_{t_{0}}^{t} \gamma(|u(s)|) ds + \int_{t_{0}}^{\ell} L \|\varphi_s\| ds.
  \end{equation*}
  Applying Lemma~\ref{lem:gronwall} to $\varphi$ with $K=\eta(t -t_{0})+k\int_{t_{0}}^{t} \gamma(|u(s)|)ds$, and taking into account that $\varphi_{t_0}\equiv 0$ because $x_{t_0} = z_{t_0} = \psi_0$, then (\ref{eq:lema6}) follows, concluding the proof of Lemma~\ref{lem6}.
\end{proof}
\begin{proof}[Proof of Lemma \ref{lem:teosigrh}]
  Given $r>0$, $\varepsilon>0$ and $T>0$, let $L=L(r)$ be given by Lemma~\ref{lem6}. Pick $\eta>0$ sufficiently small such that $\eta T e^{LT}<\varepsilon/2$ and let $k=k(r,\eta)$ be given by Lemma~\ref{lem6}. Pick $\delta>0$ such that $k \delta e^{LT}<\varepsilon/2$. 

  Suppose that for $t_0\le t^* \le t_0+T$, $\psi\in \C$ and $u\in \U$ such that $\|u\|_{\gamma}<\delta$, the maximal solutions $x(\cdot)$ and $z(\cdot)$ of (\ref{eq:sistr}) corresponding to $t_0$, $\psi$ and inputs $u$ and $\zi$, respectively, are defined on $[t_0-\tau,t^*]$ and satisfy $\|x_t\|\le r$ and $\|z_t\|\le r$ for all $t\in [t_0,t^*]$. From Lemma~\ref{lem6}, it follows that for all $t\in [t_0,t^*]$
  \begin{align*}
    \|x_t-z_t\|&\leq\left[\eta(t-t_{0})+k\int^{t}_{t_{0}}\gamma(|u(s)|)\:ds \right]e^{L(t-t_{0})} \\
    &\le \eta T e^{LT}+k \|u\|_{\gamma} e^{LT} 
    \le \frac{\varepsilon}{2}+ \frac{\varepsilon}{2}=\varepsilon.
  \end{align*}
  The proof finishes by noting that if $\phi$ is the transition map of $\Sigma^R$ then $\phi(t,t_0,\psi,u)=x_t$ and $\phi(t,t_0,\psi,\zi)=z_t$ for all $t\in [t_0,t^*]$.
\end{proof}

{\color{black} \section{Proof of Lemma \ref{lem:BIC}} \label{app:BIC}
Let $t_0\ge 0$, $x_0\in \X$, $u \in \U$ and $x(\cdot)$ be the corresponding solution of (\ref{eq:sistsl}). Let $[t_0,t_{(t_0,x_0,u)})$ be the maximal interval of definition of $x(\cdot)$. Suppose that $t_{(t_0,x_0,u)}<\infty$ and that $\|x(t)\|_{\X}\le M$ for all $t\in [t_0,t_{(t_0,x_0,u)})$ for some $M\ge 0$. Since $u(\cdot)$ is piecewise continuous, we have that it is bounded on $I=[t_0, t_{(t_0,x_0,u)}+1]
$, so that $r_u := \sup_{t\in I} \|u(t)\|_{\Ui} < \infty$. We claim that $F(t,\xi)=f(t,\xi,u(t))$ is Lipschitz in $\xi$ on bounded sets, uniformly in $t \in I$, with a Lipschitz constant that may depend on $r_u$ and that $F(t,0)$ is bounded on $I$. To see this, let $r>0$ and select $L=L(\max\{r,r_u\})$ from (SL2), so that the function $F$ satisfies
\begin{align*}
  \| F(t,\xi) - F(t,\omega) \|_{\X} = \| f(t,\xi,u(t)) - f(t,\omega,u(t)) \|_{\X} \le L \| \xi - \omega \|_{\X}
\end{align*}
for all $t\in I$, whenever $\|\xi\|_\X \le r$, $\|\omega\|_\X \le r$. 

From (SL1) and the fact that $u(\cdot)$ is piecewise continuous, we have that $t\mapsto F(t,0)$ is piecewise continuous on $I$ and therefore it is bounded, This proves the claim.

Since $x(\cdot)$ is a maximally defined weak solution of (\ref{eq:sistsl}) with $F(t,x(t))$ instead of $f(t,x(t),u(t))$, $t_{(t_0,x_0,u)}<\infty$, $F(t,\xi)$ satisfies a Lipschitz condition and $F(t,0)$ is bounded on $I$, then a slight variation of \cite[Thm.4.3.4]{cazenave_1998} implies that $x(\cdot)$ is unbounded on $[t_0, t_{(t_0,x_0,u)})$. This is a contradiction showing that $t_{(t_0,x_0,u)}<\infty$ is not possible when $x(\cdot)$ is bounded on $[t_0, t_{(t_0,x_0,u)})$. Then, $t_{(t_0,x_0,u)}=\infty$ and the BIC property follows. }

\section{Proof of Lemma \ref{lem:lema-ass2-ssl}} \label{app:p-sls}
For proving Lemma \ref{lem:lema-ass2-ssl} we use the fact that since $T(\cdot)$ is a strongly continuous semigroup, there exist $M>0$ and $w\ge 0$ such that the operator norm $\|T(t)\|\le M e^{wt}$ for all $t\ge 0$. We also need the following result, which is analogous to Lemma \ref{lem6}.
\begin{lema}\label{lem:SL3}
Suppose that the function $f$ in (\ref{eq:sistsl}) satisfies Assumption~\ref{ass:f-sml} and let $\gamma$ be given by (SL3). 
Then, for every $r>0$ and $\eta>0$ there exist $L=L(r)$ and $k= k(r,\eta)$ such that if $x(\cdot)$ and $z(\cdot)$ are the maximally defined solutions of (\ref{eq:sistsl}) corresponding to initial time $t_0\ge 0$, initial state $x_0 \in \X$ and the inputs $u\in \U$ and $\zi\in\U$, respectively, and if for some time $t^*>t_0$, $\|x(t)\|_{\X}\le r$ and $\|z(t)\|_{\X}\le r$ for all $t\in [t_0,t^*]$, then we have that
\begin{align} \label{eq:SL3}
\|x(t)-z(t)\|_{\X}\leq\left[\eta(t-t_{0})+ k\int^{t}_{t_{0}}\gamma(|u(s)|)\:ds \right]Me^{LMe^{w(t-t_0)}+w(t-t_{0})} \quad t\in [t_0,t^*]. 
\end{align}
\end{lema}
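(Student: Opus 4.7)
The plan is to mirror closely the proof of Lemma~\ref{lem6}, replacing the retarded-equation setup with the variation-of-constants (weak-solution) formula for~(\ref{eq:sistsl}). I would proceed in four steps.

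First, I would establish the semilinear counterpart of Claim~\ref{clm:keta}: for every $r>0$ and $\eta>0$ there exists $k=k(r,\eta)>0$ such that
\begin{equation*}
\|f(t,\xi,\mu)-f(t,\xi,0)\|_{\X} \le \eta + k\gamma(\|\mu\|_{\Ui})
\end{equation*}
for all $t\ge 0$, all $\xi\in\X$ with $\|\xi\|_{\X}\le r$, and all $\mu\in \Ui$. The argument is a direct transcription of that of Claim~\ref{clm:keta}: (SL4) supplies a $\delta\in(0,1)$ making the bound trivial for $\|\mu\|_{\Ui}\le \delta$, while for $\|\mu\|_{\Ui}\ge \delta$ one uses (SL3) together with $1\le \gamma(\|\mu\|_{\Ui})/\gamma(\delta)$ and the monotonicity of $N$ to take $k = N(r)\bigl(2/\gamma(\delta)+1\bigr)$.

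Second, since $x(\cdot)$ and $z(\cdot)$ are weak solutions of~(\ref{eq:sistsl}) sharing the same initial data, subtracting their variation-of-constants formulas cancels the $T(t-t_0)x_0$ term, leaving
\begin{equation*}
x(t)-z(t) = \int_{t_0}^{t} T(t-s)\bigl[f(s,x(s),u(s)) - f(s,z(s),0)\bigr]\,ds.
\end{equation*}
Adding and subtracting $f(s,x(s),0)$ in the integrand splits this into an input-driven term bounded via the Step~1 claim (applicable since $\|x(s)\|_{\X}\le r$ on $[t_0,t^*]$) and a Lipschitz term bounded by $L\|x(s)-z(s)\|_{\X}$ via (SL2), with $L=L(r)$ (applicable since also $\|z(s)\|_{\X}\le r$).

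Third, using the strong-continuity growth estimate $\|T(\tau)\|\le Me^{w\tau}$ and setting $\varphi(t):=\|x(t)-z(t)\|_{\X}$, I would combine the above into
\begin{equation*}
\varphi(t) \le M\!\!\int_{t_0}^{t}\!\! e^{w(t-s)}\bigl[\eta + k\gamma(\|u(s)\|_{\Ui})\bigr]ds + ML\!\!\int_{t_0}^{t}\!\! e^{w(t-s)}\varphi(s)\,ds.
\end{equation*}
Bounding $e^{w(t-s)}\le e^{w(t-t_0)}$ in both integrals and applying classical Gronwall to $\varphi$ with nondecreasing majorant $A(t) := Me^{w(t-t_0)}\bigl[\eta(t-t_0) + k\!\int_{t_0}^{t}\!\gamma(\|u(s)\|_{\Ui})\,ds\bigr]$ and constant-in-$s$ coefficient $MLe^{w(t-t_0)}$ would yield the estimate in~(\ref{eq:SL3}). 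I do not anticipate any serious obstacle: the only nontrivial conceptual ingredient is the first step, copied almost verbatim from Claim~\ref{clm:keta}, while the remaining steps are routine manipulations of the weak-solution formula, semigroup growth, and Gronwall's inequality. The mildly delicate bookkeeping is arranging the exponential factors so that the exponent matches the form $LMe^{w(t-t_0)}+w(t-t_0)$ appearing in~(\ref{eq:SL3}).
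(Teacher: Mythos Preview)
Your proposal is correct and follows essentially the same route as the paper: the paper also states the semilinear analogue of Claim~\ref{clm:keta} (proved via (SL3)--(SL4) exactly as you describe), subtracts the two variation-of-constants formulas, splits the integrand by adding and subtracting $f(s,x(s),0)$, bounds $e^{w(\tau-s)}\le e^{w(t-t_0)}$, and then applies Gronwall on $[t_0,t]$ with $\tau$ as the running variable and $LMe^{w(t-t_0)}$ as the (fixed-$t$) constant coefficient. Your final remark about bookkeeping the exponents is the only place requiring any care, and you have identified it correctly.
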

\begin{proof}
  The following Claim, which is analogous to that in the proof of Lemma \ref{lem6}, can be proved in the same way, but using (SL3)--(SL4) instead of (R1)--(R2).
  \begin{clm}
    \label{clm:semilin}
    For every $r>0$ and $\eta >0$, there exists $k=k(r,\eta)>0$ such that for all $t\geq 0$, $x\in\B_{r}^{\X}$ and $\mu\in\Ui$
    \begin{align*}
      \|f(t,x,\mu)-f(t,x,0)\|_{\X}\leq\eta + k \gamma(\|\mu\|_{\Ui}).
    \end{align*}
  \end{clm}
  Let $r>0$ and let $L=L(r)$ be given by (SL2) in Assumption~\ref{ass:f-sml}. Let $\eta>0$. Let $k=k(r,\eta)$ be given by Claim~\ref{clm:semilin} and let $t_0$, $t^*$, $\psi$, $u$, $x(\cdot)$ and $z(\cdot)$ be as in the statement of Lemma~\ref{lem:SL3}. 
  For $t_0\le t \le t^*$ and $\tau \in [t_0,t]$, we have that
  \begin{align*}
    x(\tau)=T(\tau-t_0)x_0+\int_{t_0}^{\tau} T(\tau-s)f(s,x(s),u(s))\:ds
  \end{align*}
  and 
  \begin{align*}
    z(\tau)=T(\tau-t_0)x_0+\int_{t_0}^{\tau} T(\tau-s)f(s,z(s),0)\:ds.
  \end{align*}
  Then, using the operator bound $\|T(h)\|\le Me^{wh}$ for all $h\ge 0$ and Claim~\ref{clm:semilin}, it follows that for all $t_0\le \tau \le t \le t^*$
  \begin{align*}
    \lefteqn{\|x(\tau)-z(\tau)\|_{\mathcal{X}} \leq \int_{t_{0}}^{\tau}\|T(\tau -s)\|\|f(s,x(s),u(s))-f(s,z(s),0)\|_{\mathcal{X}}ds}
    \hspace{5mm} &\\
    &\leq \int_{t_{0}}^{\tau}Me^{w(\tau -s)}\|f(s,x(s),u(s))-f(s,x(s),0)\|_{\mathcal{X}}ds\\
    &\quad +\int_{t_{0}}^{\tau}Me^{w(\tau -s)}\|f(s,x(s),0)-f(s,z(s),0)\|_{\mathcal{X}}ds\\
    &\leq  \int_{t_{0}}^{\tau}Me^{w(\tau -s)} \left[\eta + k \gamma(\|u(s)\|_{\mathsf{U}})\right] ds+L\int_{t_{0}}^{\tau}Me^{w(\tau -s)}\|x(s)-z(s)\|_{\mathcal{X}}ds\\
    &\leq  \int_{t_{0}}^{\tau}Me^{w(t-t_0)} \left[\eta + k \gamma(\|u(s)\|_{\mathsf{U}})\right] ds+L\int_{t_{0}}^{\tau}Me^{w(t-t_0)}\|x(s)-z(s)\|_{\mathcal{X}}ds\\
    &\leq Me^{w(t-t_0)} \left[\eta(t -t_{0})+k\int_{t_{0}}^{t}\gamma(\|u(s)\|_{\Ui})ds \right]
    +LMe^{w(t-t_0)}\int_{t_{0}}^{\tau}\| x(s)-z(s)\|_{\mathcal{X}}ds
  \end{align*}
  By applying Gronwall Lemma on the interval $[t_0,t]$, (\ref{eq:SL3}) follows.
\end{proof}
\begin{proof}[{\bf Proof of Lemma \ref{lem:lema-ass2-ssl}}]
  The proof is analogous to that of Lemma \ref{lem:teosigrh}, but using Lemma \ref{lem:SL3} instead of Lemma \ref{lem6}.
\end{proof}

\bibliographystyle{siamplain}
\bibliography{/home/hhaimo/latex/strings,biblio}
\end{document}